\documentclass[11pt,leqno]{article}
\usepackage{amssymb,amsmath,amsthm}
\overfullrule=0pt
\usepackage[alphabetic,lite]{amsrefs}
\usepackage[all,2cell,ps]{xy}
\numberwithin{equation}{section}
\usepackage{graphicx}

\usepackage{pgf,tikz}

\usepackage{enumerate}
\usepackage{mathrsfs}
\usepackage{color}

\newcommand{\nc}{\newcommand}
\nc{\on}{\operatorname}

\usepackage[colorlinks=true, pdfstartview=FitV, linkcolor=blue,%
citecolor=blue, urlcolor=blue]{hyperref}

\newcommand{\symx}{\mathscr{X}}

\newtheorem{theorem}{Theorem}[section]
\newtheorem{proposition}[theorem]{Proposition}

\newtheorem{corollary}[theorem]{Corollary}

\theoremstyle{definition}

\newtheorem{definition}[theorem]{Definition}
\newtheorem{notation}[theorem]{Notation}
\newtheorem{example}[theorem]{Example}

\newtheorem{remark}[theorem]{Remark}

\newcommand{\RR}{\mathrm{R}}

\newcommand{\rb}{\mathrm{b}}

\newcommand{\C}{{\mathbb{C}}}

\newcommand{\R}{{\mathbb{R}}}

\newcommand{\Z}{{\mathbb{Z}}}

\newcommand{\BBS}{{\mathbb S}}

\def\BBV{{\mathbb V}}

\def\phi{{\varphi}}
\def\epsilon{\varepsilon}

\newcommand{\cor}{{\bf k}}

\def\sha{\mathscr{A}}
\def\shb{\mathscr{B}}
\def\shc{\mathscr{C}}
\def\shd{\mathscr{D}}

\def\shf{\mathscr{F}}

\def\shi{\mathscr{I}}

\def\shm{\mathscr{M}}

\def\sho{\mathscr{O}}

\def\shr{\mathscr{R}}
\def\shs{\mathcal{S}}
\def\sht{\mathscr{T}}

\renewcommand{\ker}{\operatorname{Ker}}

\DeclareMathOperator{\im}{Im}


\newcommand{\rmpt}{{\rm pt}}

\newcommand{\into}{\hookrightarrow}

\renewcommand{\to}[1][]{\xrightarrow[]{#1}}

\newcommand{\isoto}[1][]{\xrightarrow[#1]%
{{\raisebox{-.6ex}[0ex][-.6ex]{$\mspace{1mu}\sim\mspace{2mu}$}}}}

\newcommand{\tto}{\rightrightarrows}


\newcommand{\muhom}{\mu hom}
\newcommand{\muHom}[1][]{\mathrm{Hom}^\mu_{\raise1.5ex\hbox to.1em{}#1}}
\newcommand{\Hom}[1][]{\mathrm{Hom}_{\raise1.5ex\hbox to.1em{}#1}}
\newcommand{\RHom}[1][]{\RR\mathrm{Hom}_{\raise1.5ex\hbox to.1em{}#1}}
\newcommand{\Ext}[2][]{\mathrm{Ext}_{\raise1.5ex\hbox to.1em{}#1}^{#2}}
\renewcommand{\hom}[1][]{{\mathscr{H}\mspace{-4mu}om}_{\raise1.5ex\hbox to.1em{}#1}}
\newcommand{\rhom}[1][]{{\RR\mathscr{H}\mspace{-3mu}om}_{\raise1.5ex\hbox to.1em{}#1}}
\newcommand{\rhomc}[1][]
{{\mathscr{H}\mspace{-3mu}om}^*_{\raise1.5ex\hbox to.1em{}#1}}

\newcommand{\ext}[2][]{{\mathscr{E}xt}_{\raise1.5ex\hbox to.1em{}#1}^{#2}}
\newcommand{\Tor}[2][]{\mathrm{Tor}^{\raise1.5ex\hbox to.1em{}#1}_{#2}}
\newcommand{\tens}[1][]{\mathbin{\otimes_{\raise1.5ex\hbox to-.1em{}{#1}}}}
\newcommand{\ltens}[1][]{\mathbin{\overset{\mathrm{L}}\tens}_{#1}}

\newcommand{\lltens}[1][]{{\mathop{\tens}\limits^{\rm L}}_{#1}}

\newcommand{\etens}[1][]{{\mathbin{\boxtimes}}_{#1}}

\newcommand{\Endo}[1][]{\mathrm{End}_{\raise1.5ex\hbox to.1em{}#1}}

\newcommand{\Aut}[1][]{\mathrm{Aut}_{\raise1.5ex\hbox to.1em{}#1}}
\newcommand{\sect}{\Gamma}
\newcommand{\rsect}{\mathrm{R}\Gamma}

\newcommand{\conv}[1][]{\mathop{\circ}\limits_{#1}}



\newcommand{\oim}[1]{{#1}_*}

\newcommand{\roim}[1]{\RR{#1}_*}

\newcommand{\reim}[1]{\RR{#1}_!}
\newcommand{\opb}[1]{#1^{-1}}

\newcommand{\epb}[1]{#1^{!}}


\newcommand{\Db}{{\cal D} b}

\newcommand{\WF}{\ensuremath{\mathrm{WF}}}


\newcommand{\eqdot}{\mathbin{:=}}

\newcommand{\cl}{\colon}
\newcommand{\scbul}{{\,\raise.4ex\hbox{$\scriptscriptstyle\bullet$}\,}}

\newcommand{\tw}[1]{\widetilde{#1}}

\newcommand{\ol}{\overline}

\newcommand{\lp}{{\rm(}}
\newcommand{\rp}{{\rm)}}

\newcommand{\Rc}{{\R\text{-c}}}
\newcommand{\wRc}{{\text{w-}\R\text{-c}}}

\newcommand{\Sol}{{\shs\mspace{-2.5mu}\mathit{ol}}}

\newcommand{\spec}{{\mathrm{spec}}}

\newcommand{\ba}{\begin{array}}
\newcommand{\ea}{\end{array}}

\newcommand{\bnum}{\begin{enumerate}[{\rm(i)}]}
\newcommand{\enum}{\end{enumerate}}
\newcommand{\banum}{\begin{enumerate}[{\rm(a)}]}
\newcommand{\eanum}{\end{enumerate}}

\newcommand{\eq}{\begin{eqnarray}}
\newcommand{\eneq}{\end{eqnarray}}
\newcommand{\eqn}{\begin{eqnarray*}}
\newcommand{\eneqn}{\end{eqnarray*}}

\newcommand{\Proof}{\begin{proof}}
\newcommand{\QED}{\end{proof}}
\newcommand{\Prop}{\begin{proposition}}
\newcommand{\enprop}{\end{proposition}}


\def\rop{{\rm op}}

\def\Op{{\rm Op}}

\DeclareMathOperator{\id}{id}
\DeclareMathOperator{\Ob}{Ob}
\DeclareMathOperator{\supp}{supp}
\DeclareMathOperator{\ori}{or}
\DeclareMathOperator{\chv}{char}

\newcommand{\Der}[1][]{\mathsf{D}^{#1}}
\newcommand{\Derb}{\Der[\mathrm{b}]}

\newcommand{\Derlb}{\Der[\mathrm{lb}]}

\newcommand{\SSi}{\mathrm{SS}}

\DeclareMathOperator{\musupp}{\mu supp}

\newcommand{\RD}{\mathrm{D}}

\newcommand{\Int}{{\rm Int}}
\newcommand{\coh}{{\rm coh}}

\newcommand{\dT}{{\dot{T}}}
\newcommand{\dTM}{{\dT}^*M}


\newcommand{\sindlim}[1][]{\smash{\mathop{\varinjlim}\limits_{#1}}\,}

\newcommand{\Cd}{\mathrm{C}}



\newcommand{\spa}{\vspace{0.5ex}\noindent}

\newcommand{\Simple}{\mathrm{Simple}}

\title{Microlocal analysis and beyond}

\author{Pierre Schapira}


\begin{document}

\maketitle

\begin{abstract}
We shall explain how the idea of ``microlocal analysis'' of the seventies has been reformulated in the framework of sheaf theory in the eighties and then applied to various branches of mathematics, such as linear partial differential equations  or symplectic topology.
\footnote{key words: microlocal analysis, sheaves, microsupport, D-modules, symplectic topology}
\footnote{2010 Mathematics Subject Classification: 14F05, 35A27, 53D37}
\end{abstract}

\tableofcontents

\section*{Introduction}
Mathematics often treat, in its own language, everyday ideas and the concepts that one encounters in this discipline are  frequently familiar. A good illustration of this fact is the dichotomy local/global. These notions appear almost everywhere in mathematics and there is a tool to handle them, this is sheaf theory, a theory elaborated in the fifties (see~\S~\ref{section:catsandshv}). 

But another notion emerged in the  seventies, that of  ``microlocal'' and being local on a manifold $M$ becomes now a global property with respect to the fibers of the cotangent bundle $T^*M\to M$. 

The microlocal point of view first  appeared in Analysis  with Mikio Sato~\cite{Sa70}, soon followed by H\"ormander~\cite{Ho71, Ho83}, who both introduced among others  the notion  of wave front set. The singularities of a  hyperfunction or a distribution on a manifold $M$ are viewed as the projection on $M$ of singularities living in  the cotangent bundle $T^*M$, more precisely in $\sqrt{-1}T^*M$, and the geometry appearing in the cotangent bundle is  in general much easier to analyze (see \S~\ref{section:microlocalan}). 

This  microlocal point of view was then extended to sheaf theory by Masaki Kashiwara and the author in the eighties (see~\cite{KS82, KS85, KS90}) who introduced  the notion of microsupport of sheaves giving rise to microlocal sheaf theory.  
To a sheaf $F$ on a real manifold $M$, one associates its ``microsupport'' $\musupp(F)$\footnote{$\musupp(F)$ was denoted $\SSi(F)$ in loc.\ cit.,  a shortcut for ``singular support''. }
a closed conic subset   of the cotangent bundle $T^*M$ which
describes the codirections of non-propagation of $F$ (see \S~\ref{section:microlocalshv}). 

Microlocal sheaf theory has many applications and we will give a glance at some of them in \S~\ref{section:applications}. First in the study of linear partial differential equations (D-modules and their solutions), which was the original motivation of this theory. Next in other branches of mathematics and in particular in symplectic topology (see in particular~\cite{Ta08} and~\cite{NZ09}). 
The reason why microlocal sheaf theory is closely connected to symplectic topology is that 
 the microsupport of a sheaf is a co-isotropic subset and the category of sheaves, localized on the cotangent bundle, is a homogeneous symplectic  invariant playing  a role similar to that of  the Fukaya category although the techniques in these two fields  are extremely different.

\vspace{3ex}\noindent
Before entering the core of our subject, we shall briefly recall our basic language, categories, homological algebra and sheaves. Then, following an historical point of view, we will recall the birth of algebraic analysis, with Sato's hyperfunctions in 1959-60, and the birth of microlocal analysis, with Sato's microfunctions around 1970. Then, we will describe some aspects of microlocal sheaf theory  (1980-90) and some of its recent applications in symplectic topology.

\section{The prehistory: categories and sheaves}\label{section:catsandshv}
In everyday life, one often speaks of  ``local'' or ``global'' phenomena (e.g., local wars, global warming) which are now common notions. These two concepts also exist  in Mathematics, in which they have a precise meaning.  On a topological space $X$, a property  is {\em locally satisfied}  if there exists an open covering of $X$ on which it is satisfied. But it can happen that a property  is locally satisfied without  being globally satisfied. For example, an equation may be locally solvable without being globally solvable. Or, more sophisticated, a manifold  is always locally orientable, but not always globally orientable, as shown by the example of the M{\"o}bius strip.  And also, a manifold is locally isomorphic (as a topological space) to an open subset of the Euclidian space $\R^n$, but the $2$-dimensional sphere 
$\BBS^2$ is not globally isomorphic to any open subset of  $\R^2$, and this is why in order to recover the earth by planar maps, one needs at least two maps.

There is a wonderful tool which makes a precise link between local and global properties and which plays a prominent role in mathematics, it is sheaf theory. Sheaf theory was created by Jean Leray when he was a war prisoner in the forties. At the beginning it was aimed at algebraic topology, but its scope goes far beyond and this language is used almost everywhere,  in algebraic geometry, representation theory, linear analysis, mathematical physic, etc. It is a basic and universal language in mathematics. Leray's ideas were extremely difficult to follow, but were clarified by Henri Cartan and Jean-Pierre Serre in the fifties who made sheaf theory an essential tool for analytic and algebraic  geometry\footnote{For a short history of sheaf theory, see the historical notes by Christian Houzel in~\cite{KS90}.}.

But, as everyone knows, in mathematics and mathematical physics
 the set theoretical point of view is often  supplanted by the categorical perspective. Category theory was introduced
by Eilenberg-McLane ~\cite{EM45}, more or less at the same time as sheaf theory,
and fantastically developed by Grothendieck, in particular  in his famous Tohoku paper~\cite{Gr57}.
The underlying idea of category theory is 
that mathematical objects only take their full force in
relation with other objects of the same type.
This   is part of a broader intellectual movement of which structuralism of Claude L{\'e}vi-Strauss and linguistics of Noam Chomsky are illustrations.

The link between categories and sheaves is due to Grothendieck.
In this seminal work of 1957,  he interprets a presheaf of sets $F$  as a contravariant functor defined on the category of all open subsets of a topological space $X$ with values in the category of sets, and  a sheaf is a presheaf which satisfies natural glueing properties (see below). 
Later Grothendieck introduced what is now called ``Grothendieck topologies'' by remarking that there is no need of a topological space to develop sheaf theory. The objects of any category may perfectly play the role of the open sets and it remains to define abstractly what the coverings are. But this is another story that we shall not develop here. 

Note that instead of looking at a sheaf as a functor on the category of open sets, one can  also associate a functor $\sect(U;\scbul)$ to each open set $U$ of $X$, from the category of sheaves to that of sets, namely the functor which to a sheaf $F$ associates $F(U)$, its value on $U$. 
When one considers sheaves with values in the category of modules over a given ring, the functor 
$\sect(U;\scbul)$ is left exact but in general not exact. This is precisely the translation of the fact that certain properties are satisfied locally and not globally. 
Then comes the dawn of modern homological algebra, with the introduction of derived functors, and it appears that the cohomology of a sheaf $F$ on $U$ is recovered by the derived functor of $\sect(U;\scbul)$ applied to $F$. These cohomology objects are a kind of measure of the obstruction to pass from local to global.

Let us be a little more precise, referring to~\cite{SGA4, KS06} for an exhaustive treatment.

\subsection{Categories}
Let us briefly recall what a category is. A category $\shc$ is the data of a set of objects, $\Ob(\shc)$, and 
given two objects $X,Y\in\Ob(\shc)$, a set $\Hom[\shc](X,Y)$, called the set of morphisms from $X$ to $Y$, and for any $X,Y,Z\in\Ob(\shc)$ a map $\circ\cl\Hom[\shc](X,Y)\times\Hom[\shc](Y,Z)\to\Hom[\shc](X,Z)$, these data satisfying some axioms which become extremely natural as soon as one thinks as $X,Y,Z$ as being for example sets, or topological spaces, or vector spaces, and $\Hom[\shc](X,Y)$ as being the set of morphisms from $X$ to $Y$, i.e., maps in case of sets, continuous maps in case of topological spaces and linear maps in case of vector spaces. It is then natural to call  the elements of $\Hom[\shc](X,Y)$ the {\em morphisms} from $X$ to $Y$ and to use the notation   $f\cl X\to Y$ for such a morphism. One shall be aware that in general the objects $X,Y,Z$ etc.  are not sets and a fortiori the morphisms are not maps. One calls the map $\circ$ the {\em composition} of morphisms, and one simply asks two things: the composition is associative, $(f\circ g)\circ h=f\circ(g\circ h)$ and for each object $X$ there exists a morphism $\id_X\cl X\to X$ which plays the role of the identity morphism, that is,  $f\circ\id_X=f$ and  $\id_X\circ g=g$ for any $f\cl X\to Y$ and $g\cl Z\to X$. 

Category theory seems at first glance extremely simple and attractive, but there are traps. Indeed, the class of all sets is not a set, as noticed by  Georg Cantor and later by Bertrand Russell whose argument is  a variant of the Greek paradox ``All Cretans are liars". This leads to inextricable problems, unless one uses the concept of universe (or other equivalent notions such as that of unaccessible cardinals) and add an axiom to Set theory, ``any set belongs to a universe'', what Grothendieck did, but perhaps what scared Bourbaki, who never introduced category theory in his globalizing project.

Applying the philosophy of categories to themselves, we have to understand what a morphism 
$F\cl\shc\to\shc'$ from a category $\shc$ to a category $\shc'$ is. Such a morphism is called {\em a functor}. It sends $\Ob(\shc)$ to $\Ob(\shc')$ and  any morphism $f\cl X\to Y$ to a morphism $F(f)\cl F(X)\to F(Y)$. Of course, one asks that $F$ commutes with the composition of morphisms, $F(g\circ f)=F(g)\circ F(f)$, and sends identity morphisms in $\shc$ to identity morphisms in $\shc'$. In practice, one often encounters ``contravariant functors'', that is, functors which reverse the arrows, $F(g\circ f)=F(f)\circ F(g)$. It is better to consider them as usual functors from  $\shc^\rop$ to  $\shc'$, where $\shc^\rop$, the opposite category of $\shc$, is the category $\shc$ in which the arrows are reversed: a morphism $f\cl X\to Y$ in $\shc^\rop$ is a morphism $f\cl Y\to X$ in $\shc$.

\subsection{Homological algebra}\label{subsection:homalg}
Homological algebra is essentially linear algebra, not over a field but over a ring and by extension, in any abelian categories, that is, categories which are modelled on the category of modules over a ring. 

Consider first a finite system of linear equations over a (not necessarily commutative) ring $\cor$:
\eq\label{eq:homal1}
&&\sum_{i=1}^{N_0} a_{ji}u_i=v_j, \quad j=1,\dots,N_1.
\eneq
Here $u_i$ and $v_j$ belong to some left $\cor$-module $S$ and $a_{ji}$ belongs to $\cor$. Denote by $P_0$ the matrix 
$(a_{ji})_{1\leq i\leq N_0, 1\leq j\leq N_1}$ and by $P_0\cdot$ this matrix acting on the left on $S^{N_0}$:
\eqn\label{eq:homal2}
&& S^{N_0}\to[P_0\cdot]S^{N_1}.
\eneqn
Now consider $\cdot P_0$, the matrix $P_0$ acting on the right on $\cor^{N_0}$, and denote by $M$ its cokernel, so that we have {\em an exact sequence}:
\eq\label{eq:homal3}
&& \cor^{N_1}\to[\cdot P_0]\cor^{N_0}\to M\to 0.
\eneq
Conversely, consider a $\cor$-module $M$ and assume that there exists an exact sequence~\eqref{eq:homal3}. 
Then, one says that $M$ admits a finite 1-presentation, but such a presentation  is not unique and different matrices with entries in $\cor$ may give isomorphic modules. 
This is similar to the fact that a finite dimensional vector space may have different systems of generators. As we shall see, when analyzing the system~\eqref{eq:homal1}, the important (and ``intrinsic'') information is not the matrix $P_0$ but the 
module\footnote{According to Mikio Sato (personal communication), at the origin of this idea is the mathematician and philosopher of the 17th century, E.~W von Tschirnhaus.} $M$.

Applying the contravariant  left exact functor $\Hom(\scbul,S)$ to~\eqref{eq:homal3} we find the exact sequence
\eqn\label{eq:homal4}
&& 0\to \Hom(M,S)\to S^{N_0}\to[P_0\cdot]S^{N_1},
\eneqn
which shows  that the kernel of $P_0\cdot$ depends only on $M$ (up to isomorphism) not on the presentation~\eqref{eq:homal3}. 

Assume now that $\cor$ is right Noetherian. Then the kernel of $\cdot P_0$ in~\eqref{eq:homal3}
is finitely generated and one can extend the exact sequence~\eqref{eq:homal3} to an exact sequence
\eq\label{eq:homal5}
&&\cor^{N_2}\to[\cdot P_1] \cor^{N_1}\to[\cdot P_0]\cor^{N_0}\to M\to0.
\eneq
By iterating this construction, one finds a long exact sequence
\eq\label{eq:homal6}
&&\cdots \to \cor^{N_2}\to[\cdot P_1] \cor^{N_1}\to[\cdot P_0]\cor^{N_0}\to M\to0.
\eneq
Consider  the complex
$M^\scbul\eqdot\cdots \to \cor^{N_2}\to[\cdot P_1] \cor^{N_1}\to[\cdot P_0]\cor^{N_0}\to 0$
and identify  $M$ with a complex {\em concentrated in degree $0$}. We get a morphism of complexes $M^\scbul\to M$ 
\eqn
&&\xymatrix{
M^\scbul=\ar[d]&\cdots \ar[r] &\cor^{N_2}\ar[r]^-{\cdot P_1}\ar[d]& \cor^{N_1}\ar[r]^-{\cdot P_0}\ar[d]&\cor^{N_0}\ar[r] \ar[d]&0\\
M=                    &\cdots \ar[r] &0\ar[r]                                                &0\ar[r]                                        &M\ar[r]                  &0
}\eneqn
and this morphism is a {\em qis}, a quasi-isomorphism, that is, induces an isomorphism on the  cohomology. 
Now one proves that, up to ``canonical isomorphism'', the complex 
$\Hom(M^\scbul,S)$ does not depend on the choice of the free resolution $M^\scbul$ and one sets
\eq\label{eq:homal8}
&&\RHom(M,S)=\Hom(M^\scbul,S).
\eneq
This object is represented by the complex (which is no more an exact sequence, but the composition of two consecutive arrows is $0$):
\eq\label{eq:homal9}
&& 0\to S^{N_0}\to[P_0\cdot]S^{N_1}\to[P_1\cdot]\cdots
\eneq
One sets
\eqn
\Ext[]{j}(M,S)&=& H^j\RHom(M,S)\\
&\simeq&\ker(P_j\cl S^{N_j}\to S^{N_{j+1}})/\im(P_{j-1}\cl S^{N_{j-1}}\to S^{N_{j}}).
\eneqn
Hence, $\Ext[]{0}(M,S)\simeq \Hom(M,S)\simeq\ker(P_0)$ and for $j>0$, 
$\Ext[]{j}(M,S)$ is the obstruction for solving  the equation $P_{j-1}u=v$, knowing that $P_{j}v=0$.

One calls $\RHom(\scbul,S)$ the {\em right derived functor} of the left exact (contravariant) functor $\Hom(\scbul,S)$ and this construction is a toy model for the general construction of derived functors.
Indeed, consider a left exact functor  of abelian categories $F\cl \sha\to\shc$. Its right derived functor $RF$ is defined as follows. Given $X$ an object of $\sha$, one first constructs 
(when it is possible)  an  exact sequence
\eq\label{eq:homal10}
&&0\to X\to I^0\to I^1\to\cdots
\eneq
where the $I^j$'s are {\em injective} objects of $\sha$. Let us denote by $I^\scbul$ the complex $0\to I^0\to I^1\to\cdots$ and set $RF(X)=F(I^\scbul)$. (The fact that the arrows in~\eqref{eq:homal5} go backward with respect to~\eqref{eq:homal10} is due to the fact that 
the functor $\Hom(\scbul,S)$ is contravariant.)

The construction of derived functors finds its natural place in the language of derived categories, again due to Grothendieck. The  derived category  $\RD(\shc)$ of an abelian category is obtained by considering complexes in $\shc$ and identifying two complexes when they are quasi-isomorphic. When considering bounded complexes (those whose objects are all $0$ except finitely many of them), one gets the bounded derived category $\Derb(\shc)$. 

Derived categories are particular cases of triangulated categories. In such categories we have ``distinguished triangles'', which play the role of exact sequences in abelian categories.
We shall not say more here. 

\subsection{Abelian sheaves}\label{subsection:abel}
Consider now a topological space $X$ and the set $\Op_X$ of its open sets. This set may be considered as a category by deciding that the morphisms are the inclusions (one morphism if $U\subset V$, no morphisms otherwise). Denote by $\md[\cor]$ the abelian category of left $\cor$-modules. A presheaf of $\cor$-modules is nothing but a functor $F\cl \Op_X^\rop\to\md[\cor]$. Hence, to any open set $U$, $F$ associates a $\cor$-module $F(U)$ and for $U\subset V$ we get a $\cor$-linear map $F(V)\to F(U)$, called the {\em restriction morphism}. Of course, the composition of the restriction morphisms associated with inclusions $U\subset V\subset W$ is the restriction morphism associated with $U\subset W$, and the restriction associated with  $U\subset U$ is the identity. If $s\in F(V)$ and $U\subset V$, one often simply denotes by $s\vert_U$ its image in $F(U)$ by the restriction morphism.

A sheaf $F$ is a presheaf satisfying natural glueing conditions. Namely, for any open subset $U$ of $X$ consider an open covering $U=\bigcup_{i\in I}U_i$. One asks\\
(i) if $s\in F(U)$ satisfies $s\vert_{U_i}=0$ for all $i\in I$, then $s=0$,\\
(ii) if one is given a family $\{s_i\}_{i\in I}$ with $s_i\in F(U_i)$ satisfying $s_i\vert_{U_i\cap U_j}=s_j\vert_{U_i\cap U_j}$ for all pairs $(i,j)$, then there exists $s\in F(U)$ such that $s\vert_{U_i}=s_i$. 
 
 The presheaf $C_X^0$ of $\R$-valued continuous functions on $X$ is a first example of a sheaf (here,  $\cor=\R$), as well as the sheaf $\cor_X$ of locally constant $\cor$-valued functions.  More generally, if $Z$ is a locally closed subset of $X$ (the intersection of an open and a closed subset), there exists a unique sheaf $\cor_{ZX}$ on $X$ whose restriction to $Z$ is the constant sheaf $\cor_Z$ on $Z$ and which is $0$ on $X\setminus Z$. 
 If there is no risk of confusion, we shall simply write $\cor_Z$ instead of $\cor_{ZX}$.
 On the other hand, the presheaf of {\em constant} functions on $X$ is not a sheaf in general.
 
 One easily proves that the category $\md[\cor_X]$ of sheaves on $X$ is an abelian category. One denotes by $\Derb(\cor_X)$ its bounded derived category.

The open set $U$ being given,  consider the functor 
 \eq
 &&\sect(U;\scbul)\cl\md[\cor_X]\to\md[\cor], \quad F\mapsto F(U).
 \eneq
 One easily shows the isomorphism of functors 
 \eqn
 &&\sect(U;\scbul)\simeq\Hom(\cor_U,\scbul).
 \eneqn
 This functor is left exact but not right exact in general. For example, take for $X$ the complex line and consider the complex of sheaves
 \eqn
 &&0\to\C_X\to\sho_X\to[\partial_z]\sho_X\to 0.
 \eneqn
 Here, $\sho_X$ is the sheaf of holomorphic functions and $\partial_z$ the holomorphic derivation. This sequence is exact due to the fact that a holomorphic function  is locally constant if and only if its derivative is $0$ and {\em locally} on $X$, one can solve the equation $\partial_zf=g$. For any non-empty connected open set $U$ the sequence 
 \eq
 &&0\to\C\to\sho_X(U)\to[\partial_z]\sho_X(U)
 \eneq
 remains exact, but  one cannot solve the equation $\partial_zf=g$ globally on $U$ when $U=\C\setminus\{0\}$ and $g(z)=1/z$. Hence, the functor $\sect(U;\scbul)$ is left exact but is not right exact.  Deriving it, one gets the functor $\rsect(U;\scbul)\cl \Derb(\cor_X)\to \Derb(\cor)$ of derived categories.
 
We have chosen to describe the functor $\rsect(U;\scbul)$ but it is a particular case of one of 
six natural functors, called the six Grothendieck operations. One has  the internal hom functor $\rhom$  and the tensor product functor  $\ltens$, the functor of direct  images $\roim{f}$ and its left adjoint $\opb{f}$, the functor of proper direct images $\reim{f}$ and its right adjoint $\epb{f}$. In these Notes, we shall make use of the duality functor $\RD'_X\eqdot\rhom(\scbul,\cor_X)$. 

Given an open set $U$ and setting $S=X\setminus U$, we have an exact sequence of sheaves
\eqn
&&0\to\cor_U\to\cor_X\to \cor_S\to0.
\eneqn
For $F\in\Derb(\cor_X)$, applying the functor  $\RHom(\scbul,F)$ we get the distinguished triangle
\eqn
&& \rsect_S(X;F)\to \rsect(X;F)\to\rsect(U;F)\to[+1].
\eneqn 
Hence, $\rsect_S(X;F)$ is the obstruction to extend uniquely to  $X$ the cohomology classes of $F$ defined on $U$. If $F$ is a usual sheaf, then $H^0\rsect_S(X;F)=\sect_S(X;F)$ is the space of sections of $F$ on $X$ supported by $S$. 

Now consider a sheaf of  rings $\shr$ on $X$. 
Replacing the constant ring $\cor$ with  $\shr$, a system of linear equations on $\shr$ becomes an $\shr$-module which is {\em locally} of finite presentation. If $\shr$ is {\em coherent}, whatever it means, such a module is called a coherent $\shr$-module. Hence, the slogan is ``a system of linear equations with coefficients in $\shr$ is nothing but a coherent left $\shr$-module''. 
We denote by  $\Derb(\shr)$ the derived category of left $\shr$-modules. 
We shall encounter such a situation in Section~\ref{subsection:Dmod}.

\subsection{An application: generalized functions}

In the sixties, people were used to work with various spaces of generalized functions constructed with the tools of functional analysis. Sato's construction of hyperfunctions in~\cite{Sa60} is at the opposite of this practice: he uses 
purely algebraic tools and complex analysis. The importance of Sato's definition is twofold:
first, it is purely algebraic (starting with the sheaf of holomorphic functions), and second it highlights  the link between real and complex geometry. Note that the sheaf $\shb_M$ of hyperfunctions on a real analytic manifold $M$ naturally contains the sheaf $\Db_M$ of distributions and has the nice property of being flabby (the restriction morphisms are surjective). 
We refer to~\cites{An07, Sc07} for an exposition of Sato's work.

Consider first the case where
$M$ is an open interval of the real line $\R$ and let $X$ be an open neighborhood 
of $M$ in the complex line $\C$ satisfying $X\cap\R=M$. Denote by $\sho(U)$ the space of holomorphic functions on an open set  $U\subset X$.
The space $\shb(M)$ of hyperfunctions on $M$ is given by
\eq\label{eq;hyperf1}
&&\shb(M)=\sho(X\setminus M)/\sho(X).
\eneq
In other words, a hyperfunction on $M$ is a holomorphic function on $X\setminus M$, and such a function is considered as $0$ if it extends to the whole of $X$. 

It is easily proved, using the solution of the Cousin problem,  that this
space depends only on $M$, not on the choice of $X$, and that the 
correspondence 
$I\mapsto \shb(I)$ ($I$ open in $M$)  defines a  flabby sheaf $\shb_M$ on $M$. 

With Sato's definition, the boundary values always exist and are no more a
limit in any classical sense. 

Sato's definition is motivated by the  well-known fact
that the Dirac function at $0$ is ``the boundary value'' of $\frac{1}{2i\pi}1/z$.
Indeed, if $\phi$ is a $C^0$-function on $\R$, one has
\eqn
&&\phi(0)=\lim_{\epsilon\to[>]0}\frac{1}{2i\pi}\int_\R(\frac{\phi(x)}{x-i\epsilon}-\frac{\phi(x)}{x+i\epsilon})dx.
\eneqn
and one can write formally:
\eqn
&&\delta(x)=\frac{1}{2i\pi}(\frac{1}{x-i0}-\frac{1}{x+i0}).
\eneqn
It follows that for any distribution $u$ on $\R$ with compact support, say $K\subset \R$, $u$ is the boundary value of the  function 
$u\star\frac{1}{2i\pi}1/z$ holomorphic on $\C\setminus K$ and in fact any distribution on $\R$ is the boundary value of a holomorphic function on $\C\setminus\R$. However, there exist holomorphic functions  on $\C\setminus\{0\}$ 
which have no boundary values as a distribution, such as  
the holomorphic function $\exp(1/z)$ defined on $\C\setminus\{0\}$.

In order to extend to the higher dimensional case  the definition of hyperfunctions, a natural idea would be as follows. Denote 
by $M$ a real analytic manifold and by $X$ a complexification of $M$. Then, it would be tempting to define 
$\shb(M)$ by formula~\eqref{eq;hyperf1}. Unfortunately, this does not work since Hartog's theorem tells us that this space is $0$ as soon as $\dim M>1$. (Here and in the sequel, $\dim$ denotes the dimension of a {\em real} manifold.)

Another way is to use local cohomology, that is, the derived functor of the functor $\sect_M(\scbul)$. Indeed, in dimension $1$ one has
\eqn
&&\shb(M)=\sho(X\setminus M)/\sho(X)\simeq H^1_M(X;\sho_X),
\eneqn
where $\sho_X$ is the sheaf of holomorphic functions on $X$,
and the presheaf $H^1_M(\sho_X)$ is a sheaf on $M$. This is the sheaf $\shb_M$. 

Note that Sato invented local cohomology independently from Grothen\-dieck in the 1960s in order to define hyperfunctions. 
On a  real analytic manifold $M$ of dimension $n$, the sheaf $\shb_M$ was originally defined as
\eqn
&&\shb_M=H^n_M(\sho_X)\tens \ori_{M},
\eneqn
after having proved that the groups $H^j_M(\sho_X)$ are $0$ for $j\neq n$.
Here, $\ori_M$ is the orientation sheaf on $M$. 
Since $X$ is oriented, Poincar{\'e}'s duality gives the isomorphism 
$\RD'_X(\C_{M})\simeq \ori_{M}\,[-n]$ 
where  $\RD_X'$ is the duality functor for sheaves on $X$. An equivalent definition of 
the sheaf of hyperfunctions is thus given  by
\eq\label{eq:hyp2}
\shb_M=\rhom(\RD'_X\C_{M},\sho_X).
\eneq
The sheaf $\sha_M$ of real analytic functions is given by 
\eqn
&&\sha_M\eqdot\C_M\tens\sho_X.
\eneqn
Since $\C_M\simeq \RD'_X\RD'_X\C_{M}$,  we get the natural morphism from real analytic functions to hyperfunctions:
\eqn
&&\sha_M\simeq \rhom(\RD'_X\C_{M},\C_X)\tens\sho_X\to \rhom(\RD'_X\C_{M},\sho_X)\simeq \shb_M.
\eneqn

Formula~\eqref{eq:hyp2} opens the door to a vast generalization of distributions and hyperfunctions: one may consider the sheaves (in the derived sense) $\rhom(F,\sho_X)$ where now $F$ is any sheaf on  $X$. This is particularly interesting when $F$ is $\R$-constructible (see Definition~\ref{def:construct} below). 

Similarly as in dimension $1$, one can represent the sheaf $\shb_M$ by using \v{C}ech cohomology of coverings of $X\setminus M$. For example, let $X$ be a Stein open subset of $\C^n$ and set $M=\R^n\cap X$.
Denote by $x$ the coordinates on $\R^n$ and by $x+\sqrt{-1}y$ the coordinates on $\C^n$.
One can recover $\C^n\setminus\R^n$ by $n+1$ open half-spaces
\eqn
&&V_i=\{(x+\sqrt{-1}y;\langle y,\xi_i\rangle>0\},\quad \xi\in\R^n\setminus\{0\}, \quad i=1,\dots,n+1 . 
\eneqn
For $J\subset\{1,\dots,n+1\}$ set $V_J=\bigcap_{j\in J}V_j$. 
Assuming $n>1$, we have the isomorphism $H^n_M(X;\sho_X)\simeq H^{n-1}(X\setminus M;\sho_X)$. 
Therefore, setting $U_J=V_J\cap X$, we have 
\eqn
&&\shb(M)\simeq \sum_{\vert J\vert=n}\sho_X(U_J)/\sum_{\vert K\vert=n-1}\sho_X(U_K).
\eneqn
Then comes naturally the following problem: how to recognize the directions associated with these $U_J$'s?
The answer is given by Sato's microlocalization functor that we shall describe in \S~\ref{section:microlocalan}.

\section{Microlocal analysis}\label{section:microlocalan}

With any real manifold $M$ (say of dimension $n$) are naturally associated two important vector bundles, the tangent bundle $\tau\cl TM\to M$ and its dual, the cotangent bundle $\pi\cl T^*M\to M$. Classically, one interprets a vector $v\in T_{x_0}M$ as the speed at the point $x_0$ of something moving on $M$ and passing at $x_0$. Up to the zero-section and to the action of $\R^+$ on these vector bundles, one may  think of $T_{x_0}M$  as the space of all light rays issued from $x_0$ and of  $T^*_{x_0}M$ as the space of all half-spaces, or walls, passing through $x_0$. 
 
The tangent bundle is more intuitive, but it appears that the cotangent bundle is much more important. It is the {\em phase space}  of the physicists and it is endowed with a fundamental structure, it is a symplectic manifold. Symplectic geometry is a very classical subject  whose origin perhaps goes back to William Hamilton in the first half  of the $19\mathrm{th}$ century. Note that the duality tangent/cotangent reflects the duality observer/observed.

Analysts have known for long, after Petrowski, Hadamard and Leray,  that, given a linear differential operator $P$ on a manifold $X$, its principal symbol $\sigma(P)$ is a well-defined function on the cotangent bundle and  the geometry of 
its characteristic variety (the zeroes of  $\sigma(P)$)  plays a role in the behaviour of its solutions. But the story  of microlocal analysis really  started in the 70's with Mikio Sato.

As already mentioned, a hyperfunction may be represented (not uniquely) as a 
 sum of boundary values of holomorphic functions defined on 
 tuboids\footnote{A tuboid is an open subset of $X$ which, in a local chart at $x_0\in M$,   contains $(\R^n\times\sqrt{-1}\Gamma)\cap W$ for an open  non-empty convex cone $\Gamma$ of $\R^n$ and an open neighborhood $W$ of $x_0$ in $X$.}
 in $X$ and an important problem is to understand from where  these boundary values come. For that purpose, Sato defined the sheaf of microfunctions $\shc_M$ on the conormal bundle $T^*_MX$ to $M$ in $X$ (the dual of the normal bundle, see below)
whose direct image on $M$ is the sheaf $\shb_M$. The reason of the success of this approach is that the study of partial differential equations is much easier  in $T^*_MX$. Consider for example the wave operator $\Box=\partial_t^2-\sum_{i=1}^n\partial_{x_i}^2$ on $\R_t\times\R^n_x$. Its characteristic variety is  a smooth manifold in $\dT^*_MX$ (the bundle $T^*_MX$ with the zero-section removed) and, locally on $\dT^*_MX$, the equation $\Box u=0$ can be reduced after a ``quantized contact transformation'' to the equation $\partial_tu=0$. 

The breakthrough of microlocal analysis quickly spread from the analytic framework to the $C^\infty$-framework, under the impulse of Lars H\"ormander  who replaced the use of holomorphic functions by that of the Fourier transform. Note that the $\sqrt{-1}$ which appears in the Fourier transform is related (in a precise sense, via the Laplace transform, see~\cite{KS97}) to the isomorphism of vector bundles $T^*_MX\simeq\sqrt{-1}T^*M$, where $T^*M$ is the cotangent bundle to $M$. 

Microfunctions are certainly an important tool of analysis, but in our opinion their construction is still more important. 
Indeed, they are constructed by applying a new functor to the sheaf $\sho_X$,  the functor $\mu_M$ of microlocalization along $M$, and this functor is obtained as the ``Fourier--Sato transform'' of the specialisation functor $\nu_M$. We shall now describe these three functors which are defined in a purely real setting. 
References are made to~\cite{KS90}.

\subsection{Specialization}
\begin{notation}
Let $\BBV\to M$ be a real vector bundle. We identify $M$ with the zero-section of $\BBV$. 
For $Z\subset \BBV$ we denote by $Z^a$ its image by the antipodal map, $(x;v)\mapsto(x;-v)$. We say that a subset $Z$ of $\BBV$ is $\R^+$-conic if it is invariant by the action of $\R^+$ on $\BBV$. 
\end{notation}
Let $X$ be a {\em real} manifold and $\iota\cl M\hookrightarrow X$ the  embedding of a closed submanifold $M$. Denote by 
$\tau_M\cl T_MX\to M$ the normal bundle to $M$ in $X$ defined by the exact sequence of vector bundles over $M$:
\eqn
&&0\to TM\to M\times_XTX\to T_MX\to0.
\eneqn
If $F$ is a sheaf on $X$, its restriction to $M$, denoted $F\vert_M$, may be viewed as a global object, namely the 
direct image by $\tau_M$ of a conic sheaf $\nu_MF$ on $T_MX$, called the specialization of $F$ along $M$. Conic means locally constant on the orbits of the action of $\R^+$. 
Intuitively, $T_MX/\R^+$ is the set of light rays issued from $M$ in $X$ and 
the germ of $\nu_MF$ at a normal vector $(x;v)\in T_MX$ is the germ at $x$ of the restriction of $F$ along the light ray $v$.

In order to construct the specialization, one first constructs a commutative  diagram of manifolds, called the normal deformation of $X$ along 
$M$:
\eqn
&&\xymatrix{
T_MX\ar@{^(->}[r]^-s\ar[d]_-{\tau_M}&\tw X_M\ar[d]^-p&\Omega\ar@{_(->}[l]_-j\ar[ld]^-{\tw p}\\
M\ar@{^(->}[r]_-{\iota}&X
},
\left.\begin{array}{ccccc}
&\\
&\\
&t\cl \tw X_M\to\R,\\
&\Omega=\opb{t}(\R_{>0}),\\
&T_MX\simeq \opb{t}(0),\\
\end{array}\right.
\eneqn
Locally, after choosing a local coordinate system $(x',x'')$ on $X$ such that $M=\{x'=0\}$, we have 
$\tw X_M=X\times\R$, $t\cl \tw X_M\to\R$ is the projection, 
$p(x',x'',t)=(tx',x'')$.

The specialization allows one to define intrinsically the notion of Whitney normal cone.
Let $S\subset X$ be a locally closed subset. The Whitney normal cone $C_M(S)$ is the closed conic subset of $T_MX$ given by
\eq\label{eq:WnormalC1}
&&C_M(S)=\ol{\opb{\tw p}(S)}\cap T_MX.
\eneq
One also defines the normal cone for two subsets $S_1$ and $S_2$ by using the diagonal $\Delta$ of $X\times X$ and  setting
\eq\label{eq:WnormalC2}
&&C(S_1,S_2)=C_\Delta(S_1\times S_2).
\eneq

The Whitney normal cone $C_M(S)$ is given in a local coordinate system $(x)=(x',x'')$ on $X$ with $M=\{x'=0\}$ by
\eqn
&&\left\{
\parbox{65ex}{
$(x''_0;v_0)\in C_M(S)\subset T_MX$ if and only
if there exists a sequence $\{(x_n,c_n)\}_n\subset S\times\R^+$ with $x_n=(x'_n,x''_n)$ such
that $x'_n\to[n]0$, $x''_n\to[n] x''_0$ and $c_n(x'_n)\to[n]v_0$.
}
\right.
\eneqn
\begin{example}
Assume that $M=\{x_0\}$. Then $C_{\{x_0\}}(S)$ is the tangent cone to $S$. 

\spa
(i) If $X=\C^n$, $x_0=0$ and 
$S=\{x\in X;f(x)=0\}$ for a holomorphic function $f$, then $C_{\{0\}}(S)=\{x\in X;g(x)=0\}$ where $g$ is the homogeneous polynomial of lowest degree in the Taylor expansion of $f$ at $0$.

\spa
(ii) Let $\BBV$ be a real finite dimensional vector space and let $\gamma$ be a closed cone.
Then $C_{0}(\gamma)=\gamma$ and $C_{0}(\gamma,\gamma)$ is the vector space generated by $\gamma$.
\end{example}

The specialization functor 
\eqn
&&\nu_M\cl\Derb(\cor_X)\to\Derb(\cor_{T_MX})
\eneqn
is then given by a  formula mimicking~\eqref{eq:WnormalC1}:
\eqn
&&\nu_MF\eqdot \opb{s}\roim{j}\opb{\tw p}F.
\eneqn
Clearly, $\nu_MF\in\Derb_{\R^+}(\cor_{T_MX})$, that is, $\nu_MF$  is an $\R^+$-conic sheaf. Moreover,
\eqn
&&\roim{\tau_M}\nu_MF\simeq \nu_MF\vert_M\simeq F\vert_M.
\eneqn
For an open cone $V\subset T_MX$, one finds that 
\eq\label{eq:sectspecial}
&&H^j(V;\nu_MF)\simeq\sindlim[U]H^j(U;F)
\eneq
where $U$ ranges through the family of open subsets of $X$ such that 
$C_M(X\setminus U)\cap V=\varnothing$. 
\begin{center}
{\begin{tikzpicture}
\draw (0,0) -- (1.5,1.5);
\draw (0,0) -- (1.5,-1.5);
\draw [dashed] (0,0) ..controls +(2,2) and  +(2,-2)  .. (0,0);
\draw (-0.3,0) node {$M$};
\draw (1.8,0.7) node {$V$};
\draw (1.0,0) node {$U$};
\end{tikzpicture}}
\end{center}

In other words, a section of $\nu_MF$ on a conic open set $V$ of $T_MX$ is given by a section of $F$ on a small open set $U$ of $X$ which is, in some sense, tangent to $V$ near $M$. 

\subsection{Fourier--Sato transform}
The classical Fourier transform is an isomorphism 
between a space of (generalized) functions on a real vector space $\BBV$ and another space on the dual space $\BBV^*$. It is an integral transform associated with a kernel on $\BBV\times\BBV^*$. The Fourier--Sato transform is again an integral transform but now in the language of the six Grothendieck operations for  sheaves.
It induces an equivalence of categories between conic sheaves on a vector bundle and conic sheaves on the dual vector bundle. It seems to have been the first integral transform for sheaves and, as its name may suggest, this construction is due to Mikio Sato.

Consider a diagram of  real vector bundles over a real manifold $M$ 
\eqn
&&\xymatrix@R=2.5ex{ 
&\BBV\times_M\BBV^*\ar[dl]_-{p_1}\ar[dr]^-{p_2}&\\
\BBV\ar[dr]_-\tau&&\BBV^*\ar[dl]^-{\pi}\\
&M&
}\eneqn
If $\gamma$ is a cone in $\BBV$, its polar cone, or dual cone, is given by
\eqn
&&\gamma^\circ=\{(x;\xi)\in \BBV^*;\langle\xi,v\rangle\geq0\mbox{ for all }v\in\gamma_x\}.
\eneqn
Then $\gamma^\circ$ is a closed convex cone of $\BBV^*$, and there is no hope to recover $\gamma$ from $\gamma^\circ$ if $\gamma$ is not convex. 
Things are different with sheaves since we can replace the usual functors on sheaves with their derived version. 
Define:
\eqn
&& P=\{(x,y)\in \BBV\times_M \BBV^*; \langle x,y\rangle \leq 0\}.
\eneqn
Denote by  $\Derb_{\R^+}(\cor_\BBV)$ the subcategory of  $\Derb(\cor_\BBV)$ consisting of conic sheaves. 
The  Fourier--Sato transform of a conic sheaf $F$  is a sheaf theoretical version of the construction of the polar cone (see Example~\ref{exa:fourier} below). It is given by the formula
\eqn
F^\wedge &=& \reim{p_2}(\opb{p_1}F)_{P}
\eneqn
\begin{theorem}\label{th:fouriersato}
The functor ${}^\wedge$ induces an equivalence of categories
\eqn
&&{}^\wedge\cl \Derb_{\R^+}(\cor_\BBV)\isoto\Derb_{\R^+}(\cor_{\BBV^*}).
\eneqn
\end{theorem}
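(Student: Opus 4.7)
The plan is to exhibit a quasi-inverse to ${}^\wedge$ and verify that the two compositions are natural isomorphisms. Let $n$ be the rank of $\BBV\to M$. I would define, for $G\in\Derb_{\R^+}(\cor_{\BBV^*})$,
$$G^\vee\eqdot \reim{p_1}(\opb{p_2}G\tens \cor_{P^a})[n],$$
where $P^a=\{(x,\xi)\in \BBV\times_M\BBV^*:\langle x,\xi\rangle\geq 0\}$ is the opposite closed half-space and the shift $[n]$ compensates the fiber dimension, as dictated by Poincar\'e duality. The goal is then to prove $(F^\wedge)^\vee\simeq F$ naturally for $\R^+$-conic $F$; the reverse composition follows by symmetry after swapping the roles of $\BBV$ and $\BBV^*$.

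Using proper base change and the projection formula on the triple fiber product $\BBV\times_M\BBV^*\times_M\BBV$, with $q_{ij}$ the projection onto the $(i,j)$-factors, the composition becomes
$$(F^\wedge)^\vee\simeq\reim{q_{13}}\bigl(\opb{q_{12}}F\tens \cor_{q_{12}^{-1}P}\tens\cor_{q_{23}^{-1}P^a}\bigr)[n].$$
Writing $r_1,r_2\cl \BBV\times_M\BBV\to\BBV$ for the two projections and pushing the partial integration over the middle $\BBV^*$-factor outside, this becomes $\reim{r_2}(\opb{r_1}F\tens K)$ for the kernel
$$K\eqdot\reim{\pi}(\cor_{P_{12}}\tens\cor_{P^a_{23}})[n]\in\Derb(\cor_{\BBV\times_M\BBV}),$$
where $\pi$ is the projection forgetting the middle $\BBV^*$-factor. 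The problem is thus reduced to identifying $K$ as a kernel that acts as the identity on $\Derb_{\R^+}(\cor_\BBV)$.

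The technical heart is a fiberwise cohomology computation: at $(x,x')\in \BBV\times_M\BBV$ over $m\in M$,
$$K_{(x,x')}\simeq\rsect_c\bigl(\BBV^*_m;\cor_{Q_{x,x'}}\bigr)[n],\qquad Q_{x,x'}=\{\xi:\langle x,\xi\rangle\leq 0,\ \langle x',\xi\rangle\geq 0\}.$$
A case analysis shows that $Q_{x,x'}$ is the whole of $\BBV^*_m$ when $x=x'=0$, a closed half-space when exactly one of $x,x'$ vanishes, a hyperplane containing the origin when $x'\in\R_{>0}x$ with both nonzero, and a proper closed convex cone strictly contained in some open half-space when $x,x'$ are nonzero and not positively proportional. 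Computing the corresponding compactly supported cohomology shows that $K$ is supported on the ``$\R^+$-conic diagonal'' $\{(x,\lambda x):\lambda\geq 0,\ x\in\BBV\}$ and there equals $\cor$ in degree $0$ after the shift. The main obstacle is organizing this case analysis cleanly and tracking the orientation signs in the degenerate cases where $x$ or $x'$ lies in the zero section; this is where the assumption of $\R^+$-conicity of $F$ enters, since only under this assumption is convolution with such a ``ray-supported'' kernel naturally isomorphic to the identity. Once $K$ is so identified, the final verification that $\reim{r_2}(\opb{r_1}F\tens K)\simeq F$ is a formal consequence of $\R^+$-equivariance, and the reverse composition is proved symmetrically.
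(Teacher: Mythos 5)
The paper states this theorem without proof (it is Theorem 3.7.9 of [KS90]), so I can only compare your outline with the standard argument; unfortunately your outline has a genuine error at what you yourself call its technical heart. The composed kernel $K\eqdot\reim{\pi}\bigl(\cor_{\{\langle x,\xi\rangle\le 0,\ \langle x',\xi\rangle\ge 0\}}\bigr)[n]$ is \emph{not} the constant sheaf in degree $0$ on the conic diagonal. By proper base change its stalk at a point $(x,\lambda x)$ with $x\ne 0$, $\lambda>0$ is $\rsect_c(\{\xi;\langle x,\xi\rangle=0\};\cor)[n]\simeq\rsect_c(\R^{n-1};\cor)[n]\simeq\cor[1]$, i.e.\ it sits in degree $-1$, not $0$; and at $(x,0)$ with $x\ne0$ (a point of your claimed support, $\lambda=0$) the fibre is a closed half-space, whose compactly supported cohomology vanishes, so the stalk is $0$. (Also, for $x,x'$ nonzero and not positively proportional the set $Q_{x,x'}$ is a wedge containing a codimension-two linear subspace, not a proper cone contained in an open half-space, though the vanishing conclusion is still correct there.) Moreover, the last step is not a ``formal consequence of $\R^+$-equivariance'': convolution of a conic sheaf with the genuine constant sheaf on the conic diagonal is \emph{not} the identity --- for $n=1$ and $F=\cor_{[0,\infty)}$ it produces the stalk $\rsect_c((0,\infty);\cor)\simeq\cor[-1]$ at $x'>0$. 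What actually makes the composition the identity is precisely the degree $-1$ concentration of $K$ along the open rays, which cancels against $\rsect_c(\R_{>0}v;F)\simeq F_v[-1]$ for conic $F$; so conicity enters through a concrete computation, and in addition you must first construct a \emph{natural} morphism between $(F^\wedge)^\vee$ and $F$ (stalkwise coincidence is not enough). In [KS90] this is done by exhibiting $\vee$ as an adjoint of $\wedge$ and proving the adjunction morphisms are isomorphisms; with your choice of $\vee$ (same-handed operations $\reim{}$ and $(\cdot)_{P^a}$ as in $\wedge$) no adjunction is available without further argument.

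A secondary point: the correct inverse transform is $G^\vee=\roim{p_1}\rsect_{P^a}(\epb{p_2}G)$, and replacing $\epb{p_2}$ by $\opb{p_2}[n]$ drops the relative orientation sheaf $\ori_{\BBV/M}$ of the bundle. Over a non-orientable vector bundle your claimed natural isomorphism $(F^\wedge)^\vee\simeq F$ fails (it holds only up to tensoring with this local system); this alone would not prevent $\wedge$ from being an equivalence, but it shows the appeal to ``Poincar\'e duality'' needs the twist you discarded. The kernel-composition strategy can be carried out, but only after identifying $K$ correctly, proving (not asserting) that convolution with it is isomorphic to the identity on $\Derb_{\R^+}(\cor_\BBV)$, and producing the natural transformation that this isomorphism is supposed to realize.
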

\begin{example}\label{exa:fourier} 
Assume for short that $M=\rmpt$ and let $n=\dim \BBV$. 

\spa
(i) Let $\gamma$ be a closed proper\footnote{A cone is proper if it contains no line.} convex cone in $\BBV$. Then:
\eqn
&&(\cor_\gamma)^\wedge \simeq \cor_{\Int (\gamma^\circ)}.
\eneqn
Here $\Int \gamma^\circ$ denotes the interior of $\gamma^\circ$. 

\spa
(ii) Let $\gamma$ be an open convex cone in $\BBV$. Then: 
\eqn
&&(\cor_\gamma)^\wedge \simeq \cor_{\gamma^{\circ a}}\, [-n].
\eneqn

\spa
(iii) Let $(x)=(x',x'')$ be coordinates on $\R^n$ with $(x')=(x_1,\dots,x_p)$ and $(x'')=(x_{p+1},\dots,x_n)$.
Denote by $(y)=(y',y'')$ the dual coordinates on $(\R^n)^*$. 
Set 
\eqn
&&\gamma=\{x;x'^2-x''^2\geq0\},\quad \lambda=\{y;y'^2-y''^2\leq0\}.
\eneqn
Then $(\cor_\gamma)^\wedge\simeq \cor_\lambda[-p]$. (See~\cite{KS97}.)
\end{example}

\subsection{Microlocalization}
Denote by $\pi_M\cl T^*_MX\to M$ the  conormal bundle to $M$, the dual bundle to $T_MX$, given by the exact sequence of  vector bundles over $M$:
\eqn
&&0\to T^*_MX\to M\times_XT^*X\to T^*M\to0.
\eneqn
The microlocalization of $F$ along $M$, denoted $\mu_MF$, is the Fourier--Sato transform of 
$\nu_MF$, hence is an object of $\Derb_{\R^+}(\cor_{T^*_MX})$. It satisfies:
\eqn
&&\roim{\pi_M}\mu_MF\simeq \mu_MF\vert_M\simeq \rsect_MF.
\eneqn
Roughly speaking, the sections of $\mu_MF$ on an open convex cone $V$ of $T^*_MX$ are the sections of $\nu_MF$ supported by the polar cone to $V$ in $T_MX$. More precisely, by using Theorem~\ref{th:fouriersato} and~\eqref{eq:sectspecial}, we get
\eq\label{eq:sectspecial}
&&H^j(V;\mu_MF)\simeq\sindlim[U,Z]H_{Z\cap U}^j(U;F)
\eneq
where $U$ ranges through the family of open subsets of $X$ such that $U\cap M=\pi(V)$ and $Z$ ranges over the family of closed subsets of $X$ such that $C_M(Z)\subset V^\circ$. 
\begin{center}
\begin{tikzpicture}
\draw (0,0) ..controls +(1,1) and  +(-0.2,0.2)  .. (1,1.5);
\draw (0,0) ..controls +(1,-1) and  +(-0.2,-0.2)  .. (1,-1.5);
\draw [dashed] (1,-1.5) ..controls +(1,1) and  +(0.2,-0.2)  .. (1,1.5);
\draw (0,0) -- (1.5,1.5);
\draw (0,0) -- (1.5,-1.5);
\draw (-0.3,0) node {$M$};
\draw (1.8,0.7) node {$V^\circ$};
\draw (0.8,0) node {$U\cap Z$};
\end{tikzpicture}
\end{center}

\label{fig:microlocalization}

\subsection{Application: microfunctions and wave front sets}\label{subsection:microf}
Assume now that $M$ is a real analytic manifold of dimension $n$ and $X$ is a complexification of $M$. 
First notice the isomorphisms 
\eqn
&&M\times_XT^*X\simeq \C\tens_\R T^*M\simeq T^*M\oplus\sqrt{-1}T^*M.
\eneqn
One deduces the  isomorphism 
\eq\label{eq:sqrt1}
&&T^*_MX\simeq \sqrt{-1}T^*M. 
\eneq
Mikio Sato introduced in~\cite{Sa70} the sheaf 
$\shc_M$ of microfunctions on $T^*_MX$ as
\eq\label{eq:hyp}
&&\shc_M=H^n(\mu_M(\sho_X))\tens\opb{\pi}\ori_M,
\eneq
after having proved that the other cohomology groups are $0$. Thus $\shc_M$ is a conic sheaf on $T^*_MX$ and one has a natural isomorphism
\eqn
&&\shb_M\isoto\oim{\pi_M}\shc_M.
\eneqn
Denote by $\spec$ the natural map:
\eqn
&&\spec\cl \sect(M;\shb_M)\isoto\sect(T^*_MX;\shc_M).
\eneqn
\begin{definition}
The wave front set $\WF(u)$ of a hyperfunction $u\in\shb(M)$ is the support of $\spec(u)$.
\end{definition}
\begin{example}
Denote by $(z_1,z_2)$ the coordinates on $X=\C^2$, with $z_j=x_j+\sqrt{-1}y_j$. Let $M=\R^2$ and let $(x_1,x_2;\sqrt{-1}\eta_1,\sqrt{-1}\eta_2)$ denote the coordinates on $T^*_MX$. 
The function $(z_1+\sqrt{-1}z_2^2)^{-1}$ defines a holomorphic function $f$ in the tuboid $\R^2\times\sqrt{-1}\{y_1>y_2^2\}$. 
The boundary value of $f$ on $\R^2$ is a hyperfunction $u$, real analytic for $(x_1,x_2)\neq(0,0)$ and whose wave front set above 
$(0,0)$ is the half-line $\{\eta_1\geq0,\eta_2=0\}$.

Note that $(x_2\partial_1+\frac{\sqrt{-1}}{2} \partial_2)u=0$ and this is in accordance with the result of Proposition~\ref{pro:WF} below. Also note that $\WF(u)$ is not co-isotropic (see Definition~\ref{def:coisotropic}) after identifying $T^*_MX$ with 
$\sqrt{-1}T^*M$ and $\sqrt{-1}T^*M$ with the symplectic manifold $T^*M$. 
\end{example}

\begin{remark}
Since the sheaf $\shb_M$ contains the sheaf $\Db_M$ of distributions, one obtains what is called {\em the analytic wave front set} of distributions. 
\end{remark}
Consider a closed convex proper cone $Z\subset T^*_MX$ which contains the zero-section $M$.
Then,  $\WF(u)\subset Z$ if and only if $u$ is the boundary value $\rb(f)$ of a holomorphic function $f$
 defined in a ``tuboid'' $U$ with ``profile'' the interior of the polar tube to $Z^a$, that is, satisfying
 \eqn
 &&C_M(X\setminus U)\cap \Int Z^{\circ a}=\varnothing.
 \eneqn
Moreover, the sheaf $\shc_M$ is conically flabby. Therefore, any hyperfunction may be decomposed as a sum of boundary values of holomorphic functions $f_i$'s defined in suitable tuboids $U_i$ and 
if we have hyperfunctions 
$u_i$ ($i=1,\dots N$) satisfying $\sum_ju_j=0$, there exist hyperfunctions $u_{ij}$ ($i,j=1,\dots N$) 
such that 
\eqn
&&u_{ij}=-u_{ji}, \quad u_i=\sum_{j=1}^Nu_{ij} \mbox{ and }\WF(u_{ij})\subset\WF(u_i)\cap\WF(u_j).
\eneqn
In other words, 
consider  holomorphic functions $g_i$'s defined in  tuboids $U_i$ and assume that $\sum_i \rb(g_i)=0$. Then  there exist 
 holomorphic functions $g_{ij}$'s defined in  tuboids $U_{ij}$ whose profile is the convex hull of $U_i\cup U_j$ such that
\eqn
&&g_{ij}=-g_{ji}, \quad g_i=\sum_{j=1}^Ng_{ij}.
\eneqn
This is the so-called ``Edge of the wedge theorem'' which was intensively studied in the seventies (see~\cite{Mr67}).

Soon after Mikio Sato had defined the sheaf $\shc_M$ and the analytic wave front set of hyperfunctions, Lars H\"ormander defined the $C^\infty$-wave front set of distributions, by using the classical Fourier transform. See~\cite{Ho71, Ho83} and also~\cite{BI73, Sj82} for related constructions.

\section{Microlocal sheaf theory}\label{section:microlocalshv}

The idea of microlocal analysis was extended to sheaf theory by Masaki Kashiwara and the author (see~\cite{KS82, KS85, KS90}), giving rise to microlocal sheaf theory.  With a sheaf $F$ on a real manifold $M$, one associates its ``microsupport'' $\musupp(F)$\footnote{Concerning the notation $\musupp$, see the footnote in the Introduction.} 
a closed conic subset   of the cotangent bundle $T^*M$. The microsupport 
describes the codirections of non-propagation of $F$. Here we consider sheaves of $\cor$-modules for a commutative unital ring $\cor$. 
Roughly speaking, a codirection $(x_0;\xi_0)\in T^*M$ does not belong to $\musupp(F)$ if for any smooth function $\phi\cl M\to\R$ such that $\phi(x_0)=0$ and $d\phi(x_0)=\xi_0$, any section of $H^j(\{\phi<0\};F)$ extends uniquely in a neighborhood of $x_0$. In other words, $(\rsect_{\{\phi\geq0\}}(F))_{x_0}\simeq0$. 
The microsupport of a sheaf describes the codirections in which it is not locally constant 
and a sheaf whose microsupport is contained in the zero-section is nothing but a locally constant sheaf.
Here again, a local notion (that of being locally constant) becomes a global notion with respect to the projection $T^*M\to M$. 

One can give natural bounds to the microsupport of sheaves after the six operations, in particular after proper direct images and 
non-characteristic inverse images. The formulas one obtains are formally similar to the classical ones  for D-modules.
As an application, one gets a generalization of Morse theory. Indeed, in the classical setting this theory asserts that given a proper function $\phi\cl M\to\R$, the topology of the set $M_t=\{x\in M;\phi(x)<t\}$ does not change as far as $t$ does not meet a critical value of $\phi$, that is, $\rsect(M_t;\cor_M)$ is constant in $t$ on an interval $(t_0,t_1)$ in which there is no critical values. Now we can replace the constant sheaf $\cor_M$ with any sheaf $F$ on $M$, a critical value of $\phi$ becoming any 
$t=\phi(x)\in\R$ such that $d\phi(x)\in\musupp(F)$. Here, we shall interpret the new notion of  ``barcodes'' (see~\cite{Gri08}) in this setting.

The main property of the microsupport is that it is a co-isotropic ({\em i.e.,} involutive) subset of the symplectic manifold $T^*M$. This is the reason why this theory has many applications in symplectic topology, as we shall see in Section~\ref{section:applications}.

\subsection{Microsupport}
Let $M$ be a real manifold, say of class $C^\infty$.

\begin{definition}[{See~\cite[Def.~5.1.2]{KS90}}]\label{def:SS}
Let $F\in \Derb(\cor_M)$. One denotes by $\musupp(F)$ the closed subset of $T^*M$ defined as follows. For an open subset $U\subset T^*M$, $U\cap\musupp(F)=\varnothing$ if and only if for  any  $x_0\in M$
and any real $C^1$-function $\phi$ on $M$ defined in a neighborhood of $x_0$ 
satisfying $d\phi(x_0)\in U$ and $\phi(x_0)=0$, one has
$(\rsect_{\{x;\phi(x)\geq0\}} (F))_{x_0}\simeq0$. One calls $\musupp(F)$ the microsupport of $F$.
\end{definition}
In other words, $U\cap\musupp(F)=\varnothing$ if the sheaf $F$ has no cohomology 
supported by ``half-spaces'' whose conormals are contained in $U$. 

In the sequel, we denote by $T^*_MM$ the zero-section of $T^*M$, identified to $M$.

\begin{itemize}
\item
By its construction, the microsupport is closed and is
conic, that is, invariant by the action of  $\R^+$ on $T^*M$. 
\item
$\musupp(F)\cap T^*_MM =\pi_M(\musupp(F))=\supp(F)$. 
\item
$\musupp(F)=\musupp(F\,[j])$ ($j\in\Z$).
\item
The microsupport satisfies the triangular inequality:
if $F_1\to F_2\to F_3\to[+1]$ is a
distinguished triangle in  $\Derb(\cor_M)$, then 
$\musupp(F_i)\subset\musupp(F_j)\cup\musupp(F_k)$ for all $i,j,k\in\{1,2,3\}$
with $j\not=k$. 
\end{itemize}

\begin{example}\label{exa:SSi}
(i)  $\musupp(F)\subset T^*_MM$ if and only if  $H^j(F)$ is locally constant on $M$ for all $j\in\Z$.

\noindent
(ii) If $N$ is a smooth closed submanifold of $M$, then 
$\musupp(\cor_N)=T^*_NM$, the conormal bundle to $N$ in $M$.

\noindent
(iii) The link between the microsupport of sheaves and the characteristic variety of D-modules will be given in Theorem~\ref{th:ssinchar1}. 

\noindent
(iv) Let $\phi$ be $C^1$-function with $d\phi(x)\not=0$ when $\phi(x)=0$.
Let $U=\{x\in M;\phi(x)>0\}$ and let $Z=\{x\in M;\phi(x)\geq0\}$. 
Then, 
\eqn
&&\musupp(\cor_U)=U\times_MT^*_MM\cup\{(x;\lambda d\phi(x));\phi(x)=0,\lambda\leq0\},\\
&&\musupp(\cor_Z)=Z\times_MT^*_MM\cup\{(x;\lambda d\phi(x));\phi(x)=0,\lambda\geq0\}.
\eneqn
\centerline{
\includegraphics[height=1.5in]{figExSSi.png}}
In these pictures, $M=\R$ and $T^*M=\R^2$. 
\end{example}

\subsubsection*{Co-isotropic subsets}
The map $\pi_M\cl T^*M\to M$ induces the maps 
\eqn
&&\xymatrix{
T^*T^*M& T^*M\times_M T^*M\ar[l]_-{\pi_{Md}}\ar[r]^-{\pi_{M\pi}}& T^*M
}\eneqn
(see Diagram~\ref{diag:cotgmor}  below). 
By composing  the map $\pi_{Md}$ with  the diagonal map $T^*M\into T^*M\times_M T^*M$, we get a map 
$\alpha_M\cl T^*M\to T^*T^*M$, that is, a section of $T^*(T^*M)$. This is
the Liouville $1$-form, given in a local homogeneous symplectic coordinate system 
$(x;\xi)$ on $T^*M$, by 
\eqn
&&\alpha_M=\sum_{j=1}^n\xi_j \,dx_j. 
\eneqn
The differential $d\alpha_M$ of the Liouville form is the symplectic form $\omega_M$ on $T^*M$ 
given in a local symplectic coordinate system 
$(x;\xi)$   on $T^*M$ by
$\omega_M=\sum_{j=1}^n d\xi_j\wedge dx_j$.
Hence $T^*M$ is not only a symplectic manifold, it is a homogeneous (or exact) symplectic manifold. 

The form $\omega_M$ induces an isomorphism 
$H\cl T^*(T^*M)\isoto T(T^*M)$ called the Hamiltonian isomorphism.  In a local symplectic coordinate system $(x;\xi)$,
this isomorphism  is given by
\eqn
&&H(\langle\lambda,dx\rangle +\langle \mu,d\xi\rangle)
=-\langle\lambda,\partial_\xi\rangle +\langle \mu,\partial_x\rangle.
\eneqn

\begin{definition}[{See~\cite{KS90}*{Def. 6.5.1}}]\label{def:coisotropic}
A subset $S$ of $T^*M$ is co-isotropic \lp one also says involutive\rp\, 
at $p\in T^*M$ if $C_p(S,S)^\perp\subset C_p(S)$. Here we identify the orthogonal $C_p(S,S)^\perp$ to a subset of $T_pT^*M$ via the Hamiltonian isomorphism.
\end{definition}
When $S$ is smooth, one recovers the usual notion.

\begin{example}
Let $M=\R$ and denote by $(t;\tau)$ the coordinates on $T^*M$. The set $\{(t;\tau);t\geq0,\tau=0\}$ is not co-isotropic, contrarily 
to the set $\{(t;\tau);t\geq0,\tau=0\cup t=0,\tau\geq0\}$ which is co-isotropic.
\end{example}

An essential property of the microsupport is given by the next theorem. 
\begin{theorem}[{See~\cite[Th.~6.5.4]{KS90}}]\label{th:sscoisotropic}
Let $F\in \Derb(\cor_M)$. Then its microsupport 
$\musupp(F)$ is co-isotropic.
\end{theorem}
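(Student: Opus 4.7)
The plan is to argue by contradiction, translating the symplectic co-isotropy condition into a tangency statement about Hamiltonian vector fields of separating functions, and then contradicting the non-propagation property built into Definition~\ref{def:SS}.

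First, I would record the following symplectic reformulation of co-isotropy at $p_0 \in \musupp(F)$: the inclusion $C_{p_0}(\musupp(F),\musupp(F))^\perp \subset C_{p_0}(\musupp(F))$ holds if and only if, for every $C^1$-function $\varphi$ on a neighborhood of $p_0$ in $T^*M$ with $\varphi(p_0)=0$ and $\musupp(F) \subset \{\varphi \le 0\}$ in that neighborhood, the Hamiltonian vector $H(d\varphi(p_0))$ belongs to the Whitney tangent cone $C_{p_0}(\musupp(F))$. This is a purely linear-algebraic translation, since elements of $C_{p_0}(\musupp(F),\musupp(F))^\perp$ correspond via the Hamiltonian isomorphism $H$ to differentials of supporting linear functionals for the convex hull of $C_{p_0}(\musupp(F))$, and tangency of $H(d\varphi(p_0))$ to that cone is equivalent to the orthogonality of Definition~\ref{def:coisotropic}.

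Now suppose the tangency conclusion fails for some $\varphi$: that is, $p_0 \in \musupp(F)$, $\musupp(F) \subset \{\varphi \le 0\}$ near $p_0$, yet $H(d\varphi(p_0)) \notin C_{p_0}(\musupp(F))$. I would construct a real $C^1$-function $\psi$ on $M$ whose $1$-jet at $x_0 \seteq \pi_M(p_0)$ produces the covector $\xi_0$ and whose graph in $T^*M$ meets $p_0$ transversely to the Hamiltonian direction $H(d\varphi(p_0))$. Using the explicit description of the Whitney normal cone and the non-tangency assumption, a small deformation of $\{\psi \ge 0\}$ yields a half-space whose family of conormals at boundary points sweeps out an open conic subset of $T^*M$ disjoint from $\musupp(F)$. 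Definition~\ref{def:SS} then delivers $(\rsect_{\{\psi \ge 0\}} F)_{x_0} \simeq 0$, and a distinguished-triangle argument, combined with the constraint $\musupp(F) \subset \{\varphi \le 0\}$, promotes this vanishing to the contradictory statement $p_0 \notin \musupp(F)$.

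The main obstacle is the passage from an arbitrary $C^1$-function $\varphi$ on $T^*M$ to a test function $\psi$ on $M$ carrying the right cotangent data: Definition~\ref{def:SS} only probes covectors of the form $d\psi(x)$, whereas the Hamiltonian geometry lives on the whole of $T^*M$. Bridging this gap requires either invoking a homogeneous contact transformation that straightens $\{\varphi = 0\}$ into a conormal hypersurface, or specializing $F$ along a well-chosen submanifold and applying the Fourier--Sato transform of Theorem~\ref{th:fouriersato} to convert the symplectic orthogonality into a genuine sheaf-theoretic vanishing. This is the technical heart of \cite{KS90} and is precisely where the functorial properties of $\musupp$ under the six operations, together with the microlocalization functor $\mu_M$, must be fully deployed.
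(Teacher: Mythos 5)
Your opening reduction is not a harmless linear-algebraic translation: it is false, and the paper's own examples refute it. Take $M=\R$ and $F=\cor_{[0,\infty)}$, so that by Example~\ref{exa:SSi}~(iv) the set $S\eqdot\musupp(F)=\{(t;\tau);t\geq0,\tau=0\}\cup\{(t;\tau);t=0,\tau\geq0\}$, which is exactly the set exhibited as co-isotropic in the example following Definition~\ref{def:coisotropic}. The function $\phi(t,\tau)=-\tau$ is $C^1$, vanishes at $p_0=(0;0)\in S$ and satisfies $S\subset\{\phi\leq0\}$, yet $H(d\phi(p_0))=-\partial_t$ does not lie in $C_{p_0}(S)=S$. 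Hence co-isotropy does \emph{not} imply your tangency condition for supporting functions, so the ``if and only if'' of your first paragraph fails. The reason is that Definition~\ref{def:coisotropic} only constrains the symplectic orthogonal of the two-sided cone $C_{p_0}(S,S)$, which contains both $C_{p_0}(S)$ and its antipode: the covectors annihilating $C_{p_0}(S,S)$ form a much smaller class than the polar cone of $C_{p_0}(S)$ (in this example $\{0\}$ versus a two-dimensional cone), and $C_{p_0}(S)$ is in general not convex, so ``supporting linear functionals of the convex hull'' do not parametrize $C_{p_0}(S,S)^\perp$. The correct test class consists of functions vanishing on $S$ (more precisely, whose differential annihilates $C_{p_0}(S,S)$), not one-sided supporting functions.

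This defect propagates: the statement your second and third paragraphs set out to prove by contradiction --- that at every $p_0\in\musupp(F)$ each supporting $\phi$ has $H(d\phi(p_0))\in C_{p_0}(\musupp(F))$ --- is itself false, as the example shows, so the sketched construction cannot be completed: no deformation of $\{\psi\geq0\}$ can sweep out an open conic set of conormals disjoint from $\musupp(F)$ there, and no distinguished-triangle argument can yield $p_0\notin\musupp(F)$, since $p_0$ genuinely lies in $\musupp(F)$. Independently of this, the step you yourself call ``the technical heart'' --- passing from a function $\phi$ on $T^*M$ to test functions $\psi$ on $M$, via contact transformations or the Fourier--Sato/microlocalization machinery --- is only named, never carried out, so even with a corrected reduction the proposal remains a program rather than a proof. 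Note finally that the survey offers no proof to compare with: it cites \cite{KS90}*{Th.~6.5.4}, where the argument is of a different nature, exploiting the functor $\muhom$ of \S~\ref{section:muhom} (in particular that $\supp\muhom(F,F)=\musupp(F)$ together with a normal-cone bound on the microsupport of $\muhom(F,F)$) rather than a direct test-function argument on $M$.
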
 

\subsubsection*{Constructible sheaves}

Assume that $M$ is real analytic and $\cor$ is a field. We do not recall here the definition of a subanalytic subset and a subanalytic stratification, referring to~\cite{BM88}. 
\begin{definition}\label{def:construct}
A sheaf $F$ is weakly $\R$-constructible if there exists a subanalytic stratification $M=\bigsqcup_\alpha M_\alpha$ such that for each strata $M_\alpha$, the restriction $F\vert_{M_\alpha}$ is locally constant. 
If moreover,  it is a local system (i.e., is locally constant of finite rank), then one says that $F$ is  $\R$-constructible. 
\end{definition}
One denotes by $\Derb_{\wRc}(\cor_{M})$ (resp.\ $\Derb_{\Rc}(\cor_{M})$) the full subcategory of $\Derb(\cor_{M})$ consisting of sheaves with weakly $\R$-constructible cohomology (resp.\  $\R$-constructible cohomology).

A subanalytic $\R^+$-conic subset $\Lambda$ of $T^*M$ is isotropic if the $1$-form $\alpha_M$ vanishes on $\Lambda$.  It is Lagrangian if it is both isotropic and co-isotropic. 

\begin{theorem}[{See~\cite{KS90}*{Th.~8.4.2}}]\label{th:construct}
Assume that $M$ is real analytic and $\cor$ is  a field. 
Let $F\in\Derb(\cor_M)$. Then $F\in\Derb_{\wRc}(\cor_{M})$ if and only if $\musupp(F)$ is contained in a closed $\R^+$-conic 
subanalytic Lagrangian subset of $T^*M$ and this implies that $\musupp(F)$ is itself a closed $\R^+$-conic 
subanalytic Lagrangian subset of $T^*M$.
\end{theorem}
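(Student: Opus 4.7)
The plan is to prove the two implications of the equivalence and then extract the ``moreover'' part as a consequence of the sharper version of the first implication.

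For the necessity direction ($F\in\Derb_{\wRc}(\cor_M)$ implies $\musupp(F)$ is contained in a closed $\R^+$-conic subanalytic Lagrangian set), I would choose a subanalytic Whitney stratification $M=\bigsqcup_\alpha M_\alpha$ such that each cohomology sheaf $H^j(F)$ is locally constant on every stratum, and set $\Lambda=\bigcup_\alpha T^*_{M_\alpha}M$. This $\Lambda$ is $\R^+$-conic and subanalytic by construction; Whitney's condition~(a) guarantees it is closed, while a fiber-dimension count shows it is Lagrangian at smooth points, hence Lagrangian in the sense of the paper's definition. The inclusion $\musupp(F)\subset\Lambda$ then follows by induction on the dimension of the strata: writing the distinguished triangles attached to the open-closed decompositions of closed unions of strata, applying the triangular inequality from the bullet list after Definition~\ref{def:SS}, and using $\musupp(\cor_N)=T^*_NM$ for smooth closed $N$ (Example~\ref{exa:SSi}(ii)), one reduces to the case of locally constant cohomology on a single open stratum, where the microsupport lies in the zero-section by Example~\ref{exa:SSi}(i).

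For the sufficiency direction, the key technical input is the existence of a \emph{$\mu$-stratification} adapted to $\Lambda$: a subanalytic Whitney stratification $M=\bigsqcup_\alpha M_\alpha$ such that $\Lambda\cap\opb{\pi_M}(M_\alpha)\subset T^*_{M_\alpha}M$ for every $\alpha$. Such a stratification exists by standard subanalytic geometry, starting from $\pi_M(\Lambda)$ and a suitable subanalytic projection of $\Lambda$ itself, and refining to restore Whitney regularity. Given such a stratification, above each stratum $M_\alpha$ the microsupport $\musupp(F)$ contains no codirections transverse to $M_\alpha$; applying Definition~\ref{def:SS} to test functions whose differentials sweep out the transverse directions at a point of $M_\alpha$, and using propagation along $M_\alpha$ (which is a version of the non-characteristic restriction estimate), one concludes that $H^j(F)\vert_{M_\alpha}$ is locally constant for each $j$, i.e.\ $F\in\Derb_{\wRc}(\cor_M)$.

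The ``moreover'' part is then a combination. Once $F$ is weakly $\R$-constructible, the necessity direction produces $\musupp(F)\subset\bigcup_\alpha T^*_{M_\alpha}M$ with the $M_\alpha$ subanalytic Whitney strata; this ambient set is subanalytic and isotropic, so $\musupp(F)$ is itself isotropic. Combined with Theorem~\ref{th:sscoisotropic} it is therefore Lagrangian. Subanalyticity of $\musupp(F)$ itself is obtained by inspecting the inductive argument above: by refining the stratification to be simultaneously adapted to each cohomology sheaf $H^j(F)$ and shrinking, $\musupp(F)$ is expressed as a finite union of closures of conormal components $T^*_{M_\alpha}M$, each of which is subanalytic and $\R^+$-conic.

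The main obstacle is the sufficiency direction, and concretely the construction and exploitation of the $\mu$-stratification. Producing a Whitney-regular subanalytic stratification whose union of conormals absorbs $\Lambda$ requires care in subanalytic geometry, and converting ``$\musupp(F)$ above $M_\alpha$ lies in $T^*_{M_\alpha}M$'' into ``$F\vert_{M_\alpha}$ has locally constant cohomology'' requires the propagation principle in all transverse directions simultaneously, which is where the microlocal content of the theorem really enters.
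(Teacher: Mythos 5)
The paper itself gives no proof of this theorem---it is quoted directly from \cite{KS90}*{Th.~8.4.2}---and your outline does track the argument given there: a stratification adapted to $F$ for the necessity direction, a $\mu$-stratification adapted to $\Lambda$ together with the propagation result for such stratifications for sufficiency, and co-isotropicity (Theorem~\ref{th:sscoisotropic}) to pass from isotropic to Lagrangian. Within that outline, the necessity induction is stated too optimistically: the triangular inequality together with $\musupp(\cor_N)=T^*_NM$ for a \emph{closed} smooth $N$ does not control the extension by zero from a merely locally closed stratum. The real content of the induction is the bound on $\musupp(F\tens\cor_{M_\alpha})$ over boundary points of $M_\alpha$, which requires the non-proper estimates (the normal-cone refinements of the six-operation bounds recalled in the Remark following Corollary~\ref{cor:petrovsky}) and the Whitney/$\mu$ condition to ensure that $\bigcup_\alpha T^*_{M_\alpha}M$ is stable under those operations; condition (a) is used for much more than the closedness of $\Lambda$. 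This is a repairable lack of detail rather than a wrong turn.

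The genuine gap is in the ``moreover'' part. You deduce subanalyticity of $\musupp(F)$ from the claim that, after refining the stratification, $\musupp(F)$ is a finite union of closures of conormal bundles $T^*_{M_\alpha}M$. That claim is false: a conormal bundle is fiberwise a linear subspace, hence invariant under the antipodal map, whereas microsupports are not. Already for $F=\cor_Z$ with $Z=\{\phi\geq0\}$ (Example~\ref{exa:SSi}~(iv)), over the hypersurface $\{\phi=0\}$ the microsupport contains only the half-line $\{\lambda\,d\phi;\ \lambda\geq0\}$, so no refinement of the stratification can express $\musupp(F)$ as a union of full conormal bundles. The inclusion $\musupp(F)\subset\bigcup_\alpha T^*_{M_\alpha}M$ yields the isotropic bound, but not subanalyticity, since a subset of a subanalytic set need not be subanalytic. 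Establishing that $\musupp(F)$ is itself subanalytic requires a separate argument identifying its trace over each stratum intrinsically---for instance as the support of a conic weakly constructible object on $T^*_{M_\alpha}M$ obtained by microlocalization along the stratum, or via the identification of $\musupp(F)$ with $\supp\muhom(F,F)$---and this is precisely the point where the cited proof does work that your sketch replaces with an incorrect structural description of the microsupport.
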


\subsection{Microsupport and the six operations}
Let $f\cl M\to N$ be  a morphism of  real manifolds. The tangent map $Tf\cl TM\to TN$ decomposes as 
$TM\to M\times_NTN\to TN$ and by duality, one gets the diagram: 
\eq\label{diag:cotgmor}
&&\xymatrix{
T^*M\ar[rd]_-{\pi_M}&M\times_N\ar[d]^-\pi\ar[l]_-{f_d}\ar[r]^-{f_\pi}T^*N
                                      & T^*N\ar[d]^-{\pi_N}\\
&M\ar[r]^-f&N.
}\eneq
One sets 
\eqn
&&T^*_MN\eqdot\ker f_d= \opb{f_d}(T^*_MM). 
\eneqn
Note that, denoting by $\Gamma_f$ the graph of $f$ in $M\times N$, 
the projection $T^*(M\times N)\to M\times T^*N$ identifies 
$T^*_{\Gamma_f}(M\times N)$ and $M\times_NT^*N$.
\begin{definition}
Let $\Lambda\subset T^*N$ be a closed $\R^+$-conic subset. One says that 
$f$ is non-characteristic for $\Lambda$ if
\eqn
&&\opb{f_\pi}\Lambda\cap T^*_MN\subset M\times_NT^*_NN.
\eneqn
\end{definition}
This is equivalent to saying that $f_d$ is proper on $\opb{f_\pi}\Lambda$.

\begin{theorem}\label{th:oim}
Let $f\cl M\to N$ be a morphism of manifolds,
let $F\in\Derb(\cor_M)$ and assume that $f$ is proper on $\supp(F)$. Then $\reim{f}F\isoto\roim{f}F$
and 
\eq\label{eq:ssoim}
&&\musupp(\roim{f}F)\subset f_\pi\opb{f_d}\musupp(F).
\eneq 
Moreover, if $f$ is a closed embedding, this inclusion is an equality. 
\end{theorem}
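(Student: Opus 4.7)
The identity $\reim{f}F \isoto \roim{f}F$ is the standard consequence of $f$ being proper on $\supp F$, which I would take as given. For the microsupport bound, the idea is to test $\roim{f}F$ against half-spaces $\{\psi\geq0\}$ on $N$ and pull the test back to $M$ through the isomorphism
$$\rsect_{\{\psi\geq 0\}}\roim{f}F \;\simeq\; \roim{f}\rsect_{\{\psi\circ f\geq 0\}}F,$$
obtained from $\opb{f}\cor_Z = \cor_{\opb{f}Z}$ (for $Z=\{\psi\geq0\}$) and the $(\opb{f},\roim{f})$-adjunction. The stalk of the right-hand side at $y_0$ is then computed via proper base change, using that $\supp F$ is proper over $N$. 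The equality for a closed embedding will be obtained by reversing this reduction, extending test functions locally from $M$ to $N$.

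\textbf{Proof of the inclusion.} Fix $(y_0;\eta_0)\notin f_\pi\opb{f_d}\musupp(F)$. The complement of $f_\pi\opb{f_d}\musupp F$ is open in $T^*N$: the closed set $\opb{f_d}\musupp F\subset M\times_N T^*N$ has base projection contained in $\supp F$, so $f_\pi$ is proper in restriction to it and its image is closed. Let $\psi$ be any $C^1$ function near $y_0$ with $\psi(y_0)=0$ and $d\psi(y_0)=\eta_0$, and set $G\seteq\rsect_{\{\psi\circ f\geq 0\}}F$, so $\supp G\subset\supp F$ is proper over $N$. Proper base change yields
$$\bl \roim{f}G\br_{y_0}\;\simeq\;\rsect\bl\opb{f}(y_0);\,G\vert_{\opb{f}(y_0)}\br.$$
For $x\in\opb{f}(y_0)\setminus\supp F$ one has $G_x\simeq 0$ trivially; for $x\in\opb{f}(y_0)\cap\supp F$ the function $\phi\seteq\psi\circ f$ satisfies $\phi(x)=0$ and $d\phi(x)=f_d(x,\eta_0)\notin\musupp F$, this covector being nonzero because otherwise $0_x\in\musupp F\cap T^*_MM$ would give $f_d(x,\eta_0)=0_x\in\musupp F$, contradicting the hypothesis. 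Definition~\ref{def:SS} then forces $G_x\simeq 0$, so $G\vert_{\opb{f}(y_0)}\simeq 0$ in $\Derb$ and $\bl\rsect_{\{\psi\geq 0\}}\roim{f}F\br_{y_0}\simeq 0$. The same argument works at every $(y_0';\eta_0')$ in the open complement of $f_\pi\opb{f_d}\musupp F$, showing this complement is disjoint from $\musupp(\roim{f}F)$.

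\textbf{Equality for a closed embedding, and the main obstacle.} Let $f=i\cl M\into N$. We must show $i_\pi\opb{i_d}\musupp F\subset\musupp\roim{i}F$. Suppose otherwise: pick $(x,\eta)\in M\times_N T^*N$ with $\xi_0\seteq i_d(x,\eta)\in\musupp F$, and an open $W\subset T^*N$ with $i_\pi(x,\eta)\in W$ and $W\cap\musupp\roim{i}F=\varnothing$. Since $i_d\cl M\times_N T^*N\to T^*M$ is fibrewise surjective, $V\seteq i_d\bl W\cap(M\times_N T^*N)\br$ is an open neighborhood of $\xi_0$ in $T^*M$. Given any $x_0\in M$ and $C^1$ function $\phi$ near $x_0$ with $\phi(x_0)=0$ and $d\phi(x_0)\in V$, choose a lift $\eta_0\in T^*_{x_0}N$ of $d\phi(x_0)$ with $(x_0,\eta_0)\in W$, and in a local chart $(x',x'')$ with $M=\{x''=0\}$ set $\psi(x',x'')\seteq\phi(x')+\langle\eta_0'',x''\rangle$, where $\eta_0''$ is the $x''$-component of $\eta_0$. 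Then $\psi(x_0)=0$, $d\psi(x_0)=\eta_0\in W$, and $\psi\circ i=\phi$; the adjunction isomorphism of the first paragraph combined with $(\roim{i}H)_{x_0}\simeq H_{x_0}$ for $x_0\in M$ yields $(\rsect_{\{\phi\geq 0\}}F)_{x_0}\simeq0$. This holds for all such $x_0,\phi$, so $V\cap\musupp F=\varnothing$, contradicting $\xi_0\in\musupp F$. The main technical obstacle is the stalkwise reduction via proper base change, which transports microlocal information from fibres of $f$ to $N$; in the closed-embedding step the delicate point is constructing $\psi$ on $N$ extending $\phi$ on $M$ with a prescribed first jet, so that $d\psi(x_0)$ lands in the given open $W$.
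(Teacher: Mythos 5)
Your proposal is correct, and since the paper states this theorem without proof (it is quoted from \cite{KS90}), there is nothing to contrast: your argument --- testing $\roim{f}F$ against half-spaces, transporting the test via $\rsect_{\{\psi\geq0\}}\roim{f}F\simeq\roim{f}\rsect_{\{\psi\circ f\geq0\}}F$, killing the fibrewise stalks by Definition~\ref{def:SS} after proper base change, with the openness of the complement of $f_\pi\opb{f_d}\musupp(F)$ coming from properness of $f$ on $\supp(F)$, and, for a closed embedding, extending a test function on $M$ to one on $N$ with prescribed first jet --- is precisely the standard proof from loc.\ cit. The only ingredients you take for granted ($\reim{f}\simeq\roim{f}$ on objects with support proper over $N$, proper base change, and the adjunction formula for $\rhom(\cor_Z,\scbul)$) are indeed standard and legitimately assumed.
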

This result may be interpreted as a ``stationary phase lemma'' for sheaves, or else as a generalization of Morse theory for sheaves (see below).

\centerline{
\includegraphics[height=1.5in]{figDirI.png}}
On this picture, one represents the direct image of the constant sheaf on the contour.  The microsupport of the direct image is contained in the image of the ``horizontal'' conormal vectors. 
This shows that the inclusion in Theorem~\ref{th:oim} may be strict.

\begin{theorem}\label{th:opb}
Let $f\cl M\to N$ be a morphism of manifolds, let $G\in\Derb(\cor_N)$ and 
assume that $f$ is non-characteristic with respect to $\musupp(G)$. Then
the natural morphism $\opb{f}G\tens\, \omega_{M/N}\to\epb{f}G$
is an isomorphism and 
 \eq\label{eq:ssopb}
 &&\musupp(\opb{f}G)\subset f_d\opb{f_\pi}(\musupp(G)).
 \eneq
 Moreover, if $f$ is submersive, this inclusion is an equality. 
\end{theorem}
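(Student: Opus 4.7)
The plan is to reduce to two elementary cases via the factorization $f = p_2 \circ \Gamma_f$, where $\Gamma_f\cl M \to M \times N$ is the graph embedding (a closed embedding) and $p_2\cl M \times N \to N$ is the second projection (a submersion). Non-characteristicness for $p_2$ is automatic; meanwhile, using the identification $T^*_{\Gamma_f}(M \times N) \simeq M \times_N T^*N$ recalled just before the statement of the theorem, non-characteristicness of $f$ for $\musupp(G)$ transports to non-characteristicness of $\Gamma_f$ for $\musupp(\opb{p_2}G)$, once the submersive case has been established to bound the latter. It thus suffices to treat the submersion case and the closed embedding case separately.

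For a submersion $p\cl M \to N$, the map $p_d\cl M \times_N T^*N \into T^*M$ is a closed embedding. The isomorphism $\opb{p}G \tens \omega_{M/N} \isoto \epb{p}G$ is a general relative duality formula for smooth morphisms and holds without any non-characteristic hypothesis. For the microsupport estimate, one argues directly from Definition~\ref{def:SS}: given $(x_0; \xi_0) \notin p_d \opb{p_\pi}(\musupp(G))$ and a real $C^1$-function $\phi$ on $M$ with $\phi(x_0) = 0$ and $d\phi(x_0) = \xi_0$, either $\xi_0$ has a nonzero component transverse to the fibers of $p$, in which case $\opb{p}G$ is locally constant along the fibers and the vanishing $(\rsect_{\{\phi \geq 0\}} \opb{p}G)_{x_0} \simeq 0$ is immediate, or $\xi_0 = p_d(\eta_0)$ with $\eta_0 \notin \musupp(G)$ above $p(x_0)$, in which case the vanishing for $\opb{p}G$ reduces to the corresponding vanishing for $G$ via a local trivialization $M \cong N \times F$. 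Running the argument backwards (lift a test function on $N$ to $M$) shows the reverse inclusion and gives equality.

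For a closed embedding $i\cl M \into N$, the map $i_d\cl M \times_N T^*N \epito T^*M$ is surjective with kernel $T^*_M N$, so the non-characteristic condition $\opb{i_\pi}\musupp(G) \cap T^*_M N \subset M \times_N T^*_N N$ asserts that $\musupp(G)$ meets the conormal bundle $T^*_M N$ only in the zero section. The microsupport bound again follows from Definition~\ref{def:SS}: a test function $\phi$ on $M$ with $d\phi(x_0) = \xi_0$ extends, by the non-characteristic hypothesis, to a test function $\tilde\phi$ on a neighborhood of $i(x_0)$ in $N$ whose differential lifts $\xi_0$ via $i_d$, and the local vanishing of $\rsect_{\{\tilde\phi \geq 0\}} G$ transports to $\opb{i}G$ via $\opb{i}\rsect_{\{\tilde\phi \geq 0\}} G \simeq \rsect_{\{\phi \geq 0\}}\opb{i}G$. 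The main obstacle is proving the isomorphism $\opb{i}G \tens \omega_{M/N} \isoto \epb{i}G$: this is the substantive content of the non-characteristic closed embedding case. It requires the specialization--microlocalization machinery of Section~\ref{section:microlocalan}. The non-characteristic hypothesis forces $\nu_M G$ to be locally constant in the directions of $T_M N$ polar to the covectors avoided by $\musupp(G)$, from which one deduces---via the Fourier--Sato equivalence (Theorem~\ref{th:fouriersato}) applied to $\mu_M G$---that $\opb{i}G$ and $\epb{i}G$ differ only by the orientation-theoretic twist $\omega_{M/N}$.
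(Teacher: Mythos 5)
The paper itself gives no proof of Theorem~\ref{th:opb} (it is quoted from \cite{KS90}*{Prop.~5.4.13}), and your overall architecture --- factor $f$ through the graph embedding, handle the submersion by Poincar\'e--Verdier duality and local triviality, and handle a non-characteristic closed embedding through the specialization/microlocalization machinery (the point being that $\supp\mu_M(G)\subset\musupp(G)\cap T^*_MN$ is then contained in the zero-section, so Sato's triangle collapses and yields $\opb{i}G\tens\omega_{M/N}\isoto\epb{i}G$) --- is exactly the standard route. The reduction step is fine. But there is a genuine gap in your proof of the estimate~\eqref{eq:ssopb} in the closed embedding case: you extend $\phi$ to $\tilde\phi$ and ``transport'' the vanishing through a claimed isomorphism $\opb{i}\rsect_{\{\tilde\phi\geq0\}}(G)\simeq\rsect_{\{\phi\geq0\}}(\opb{i}G)$. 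Only a canonical morphism $\opb{i}\rsect_{\{\tilde\phi\geq0\}}(G)\to\rsect_{\{\phi\geq0\}}(\opb{i}G)$ exists in general, and it is not an isomorphism: already for $N=\R$, $M=\{0\}$, $G=\cor_\R$, $\tilde\phi(t)=t$ (a situation where $i$ is non-characteristic for $\musupp(G)$), the left-hand stalk at $0$ vanishes while the right-hand side is $\cor$. Commuting $\rsect_Z(\scbul)$ with a non-submersive pullback is itself a non-characteristic restriction statement: to justify it you would need $i$ to be non-characteristic for $\musupp(\rsect_{\{\tilde\phi\geq0\}}G)$, which is strictly larger than $\musupp(G)$ (it acquires the covectors $\lambda d\tilde\phi$ along $\{\tilde\phi=0\}$), so the argument presupposes bounds of the very kind being proved. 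In \cite{KS90} this part is not obtained this way; it requires either the cut-off/deformation arguments of Chapter~5 (reduction to a hypersurface and the bounds for $\rsect_\Omega$, $G_\Omega$ with smooth boundary transverse to $\musupp(G)$) or the Chapter~6 estimates on $\musupp(\rsect_{\{\tilde\phi\geq0\}}G)$.

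Two further points are glossed over. In the submersion case, the vanishing when $\xi_0$ has a nonzero component along the fibers is not ``immediate'': it is precisely the inclusion $\musupp(\opb{p}G)\subset p_d(M\times_NT^*N)$, i.e.\ the nontrivial half of the statement, and needs a propagation argument (locally $\opb{p}G\simeq G\etens\cor_{\R^k}$, so one needs the microsupport bound for external products, or the non-characteristic deformation lemma); once you have that, your case analysis and the reverse inclusion by lifting test functions are fine. In the embedding case, your description of the isomorphism $\opb{i}G\tens\omega_{M/N}\isoto\epb{i}G$ (``$\nu_MG$ locally constant in the polar directions\dots'') gestures at the right mechanism but should be pinned down as: $\supp\mu_M(G)\subset\musupp(G)\cap T^*_MN\subset T^*_NN$ by non-characteristicity, hence $\reim{\pi_M}\mu_M(G)\isoto\roim{\pi_M}\mu_M(G)$, which identifies $\opb{i}G\tens\omega_{M/N}$ with $\epb{i}G$ via the standard identities for $\mu_M$. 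So the skeleton matches the standard proof, but the microsupport bound for the embedding, as you argue it, does not go through.
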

Note that the formulas for the microsupport of the direct or inverse images  is analogue to those for the 
direct or inverse images of D-modules.

\begin{corollary}\label{cor:opboim}
Let $F_1,F_2\in\Derb(\cor_M)$.
\bnum
\item
Assume that $\musupp(F_1)\cap\musupp(F_2)^a\subset T^*_MM$. Then
\eqn
&&\musupp(F_1\lltens F_2)\subset \musupp(F_1)+\musupp(F_2).
\eneqn
\item
Assume that $\musupp(F_1)\cap\musupp(F_2)\subset T^*_MM$. Then
\eqn
&&\musupp(\rhom(F_2,F_1))\subset \musupp(F_2)^a+\musupp(F_1).
\eneqn
\enum
\end{corollary}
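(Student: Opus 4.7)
The plan is to deduce both (i) and (ii) from Theorem~\ref{th:opb}, applied to the diagonal embedding $\delta\cl M\into M\times M$. Writing $q_1,q_2\cl M\times M\to M$ for the two projections and identifying $T^*(M\times M)\simeq T^*M\times T^*M$, the canonical maps associated with $\delta$ in Diagram~\eqref{diag:cotgmor} read
\[
\delta_\pi(x;\xi_1,\xi_2)=\bigl((x;\xi_1),(x;\xi_2)\bigr),\qquad
\delta_d(x;\xi_1,\xi_2)=(x;\xi_1+\xi_2),
\]
since $d\delta_x\cl T_xM\to T_xM\oplus T_xM$ is $v\mapsto(v,v)$ and its transpose is $(\xi_1,\xi_2)\mapsto\xi_1+\xi_2$. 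In particular $T^*_M(M\times M)$ is cut out by $\xi_1+\xi_2=0$, which is precisely the conormal to the diagonal.

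For (i), I would first establish the exterior bound
\[
\musupp(F_1\letens F_2)\subset\musupp(F_1)\times\musupp(F_2)
\]
directly from Definition~\ref{def:SS}, by testing with a $C^1$-function of separated form $\phi(x_1,x_2)=\phi_1(x_1)+\phi_2(x_2)$. Since $F_1\lltens F_2\simeq\opb{\delta}(F_1\letens F_2)$, I then apply Theorem~\ref{th:opb}. Set $\Lambda_i:=\musupp(F_i)$. A point $(x;\xi_1,\xi_2)\in\opb{\delta_\pi}(\Lambda_1\times\Lambda_2)\cap T^*_M(M\times M)$ satisfies $\xi_1+\xi_2=0$ with $(x;\xi_1)\in\Lambda_1\cap\Lambda_2^a$; the hypothesis of (i) forces $\xi_1=0$, so the non-characteristic condition holds. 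Theorem~\ref{th:opb} then yields
\[
\musupp(F_1\lltens F_2)\subset\delta_d\opb{\delta_\pi}(\Lambda_1\times\Lambda_2)=\Lambda_1+\Lambda_2,
\]
the last equality because $\delta_d(x;\xi_1,\xi_2)=(x;\xi_1+\xi_2)$.

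Part (ii) proceeds along the same lines with an exterior Hom formula: $\rhom(F_2,F_1)$ is, up to a shift by the relative dualizing complex $\omega_{M/M\times M}$, a diagonal pullback of an exterior Hom sheaf on $M\times M$ whose microsupport is bounded by $\musupp(F_2)^a\times\musupp(F_1)$; the antipode in the first factor reflects the contravariance of $\rhom$ in its first argument. Running the previous linear-algebra computation with $\Lambda_2^a\times\Lambda_1$ in place of $\Lambda_1\times\Lambda_2$, the non-characteristic condition becomes $-\xi_2+\xi_1=0$ with $(x;\xi_1)\in\Lambda_1$ and $(x;\xi_2)\in\Lambda_2$, equivalently $\Lambda_1\cap\Lambda_2\subset T^*_MM$; and $\delta_d$ sends $\Lambda_2^a\times\Lambda_1$ onto $\Lambda_2^a+\Lambda_1$, giving the announced bound.

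The main obstacle is the preliminary exterior estimate in each part. These cannot be deduced from Theorem~\ref{th:opb} alone on pain of circularity, so they must be obtained by hand from Definition~\ref{def:SS}, exploiting that the testing condition in one variable interacts with the other factor only through its value along the zero section. Once those two exterior bounds are granted, everything else is the transparent linear-algebra computation above with the conormal $\{\xi_1+\xi_2=0\}$ to the diagonal.
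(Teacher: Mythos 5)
Your overall route is exactly the intended one: write $F_1\lltens F_2\simeq\opb{\delta}(F_1\letens F_2)$ and $\rhom(F_2,F_1)\simeq\epb{\delta}\rhom(\opb{q_2}F_2,\epb{q_1}F_1)$, bound the microsupport of the exterior object by a product of microsupports, check that the hypothesis of (i) (resp.\ (ii)) is precisely the non-characteristicity of $\delta$ for that product, and apply Theorem~\ref{th:opb}, noting that the twist by $\omega_{M/M\times M}$ does not affect the microsupport. Your identification of $\delta_d$, $\delta_\pi$, of $\ker\delta_d$ with the conormal to the diagonal, and the computation $\delta_d\opb{\delta_\pi}(\Lambda_1\times\Lambda_2)=\Lambda_1+\Lambda_2$ (resp.\ $\Lambda_2^a+\Lambda_1$) are all correct; this is the argument of~\cite{KS90}*{\S~5.4} that the paper implicitly invokes.

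The gap is in the one place where real work is required, namely the exterior estimates $\musupp(F_1\letens F_2)\subset\musupp(F_1)\times\musupp(F_2)$ and $\musupp(\rhom(\opb{q_2}F_2,\epb{q_1}F_1))\subset\musupp(F_2)^a\times\musupp(F_1)$. You propose to get the first ``directly from Definition~\ref{def:SS}, by testing with a $C^1$-function of separated form $\phi_1(x_1)+\phi_2(x_2)$'', but this cannot work as stated: to exclude a covector from the microsupport one must prove the vanishing of $(\rsect_{\{\phi\geq0\}}(F_1\letens F_2))_{(x_1,x_2)}$ for \emph{every} $C^1$ function $\phi$ whose differential lies in the given neighborhood, and vanishing for split test functions does not formally imply it for general $\phi$ (there is no K\"unneth-type decomposition of $\rsect_{\{\phi\geq0\}}$ unless $\phi$ splits). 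Nor can you obtain the exterior bound from Theorem~\ref{th:opb} or from part (i) applied on $M\times M$ without circularity, as you yourself note. In~\cite{KS90} these exterior estimates are separate propositions whose proofs use the finer characterizations of the microsupport (microlocal cut-off type arguments), and they constitute the substantive input of the corollary; everything after them is the formal linear algebra you carried out. So your proposal is structurally the paper's proof, but the key estimate is asserted rather than proved, and the method you sketch for it would not succeed.
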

Note that the formula for the microsupport of the tensor product is analogue to that giving a bound to the wave front set of the product of two distributions $u_1$ and $u_2$ satisfying $\WF(u_1)\cap\WF(u_2)^a\subset \sqrt{-1}T^*_MM$. 

\begin{corollary}[{A kind of Petrowsky theorem for sheaves}]\label{cor:petrovsky}
Let $F_1,F_2\in\Derb(\cor_M)$. Assume that $F_2$ is constructible and $\musupp(F_2)\cap\musupp(F_1)\subset T^*_MM$.
Then the natural morphism 
\eq\label{eq:elliso1}
&&\RD_M'F_2\tens F_1\to\rhom(F_2,F_1).
\eneq
is an isomorphism. 
\end{corollary}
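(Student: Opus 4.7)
The plan is to reduce, via devissage on the $\R$-constructible sheaf $F_2$, to the case where $F_2$ is the constant sheaf on a smooth closed submanifold, where the statement becomes a direct consequence of Theorem~\ref{th:opb}. The question being local on $M$, I would work in a neighborhood of an arbitrary point. Both functors $F_2 \mapsto \RD'_M F_2 \tens F_1$ and $F_2 \mapsto \rhom(F_2,F_1)$ are triangulated (contravariantly) in $F_2$, and the morphism \eqref{eq:elliso1} is a natural transformation between them. Choosing a subanalytic stratification $M=\bigsqcup_\alpha M_\alpha$ adapted to $F_2$, Theorem~\ref{th:construct} implies that $F_2$ admits a finite filtration whose successive quotients are of the form $\eim{i_\alpha} L_\alpha$ for $L_\alpha$ a local system on a stratum and $i_\alpha\cl M_\alpha \hookrightarrow M$. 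Handling these building blocks by adjunction and the projection formula reduces the problem to the case $F_2=\cor_N$ for a smooth closed submanifold $i_N\cl N\hookrightarrow M$.

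In this case Example~\ref{exa:SSi}(ii) gives $\musupp(\cor_N)=T^*_N M$, so the transversality hypothesis translates exactly into $i_N$ being non-characteristic for $F_1$, and Theorem~\ref{th:opb} yields $\opb{i_N} F_1 \tens \omega_{N/M} \isoto \epb{i_N} F_1$. On the one hand, by adjunction $\rhom(\cor_N,F_1)\simeq \roim{i_N}\epb{i_N}F_1$; on the other, a direct computation for the closed embedding $i_N$ gives $\RD'_M\cor_N\simeq \roim{i_N}\omega_{N/M}$. The projection formula then identifies $\RD'_M\cor_N\tens F_1$ with $\roim{i_N}(\omega_{N/M}\tens\opb{i_N}F_1)\simeq \roim{i_N}\epb{i_N}F_1$, and a diagram chase confirms that \eqref{eq:elliso1} is precisely this isomorphism.

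The main obstacle is the bookkeeping in the devissage step: one must verify that the transversality condition $\musupp(F_2)\cap\musupp(F_1)\subset T^*_MM$ passes to each graded piece $\eim{i_\alpha}L_\alpha$ of the filtration of $F_2$, which requires Theorem~\ref{th:construct} to locate the conormal bundles $T^*_{M_\alpha}M$ inside $\musupp(F_2)$. A more conceptual alternative that bypasses the stratification is to write $\rhom(F_2,F_1)\simeq\opb{\delta}\rhom(p_2^{-1}F_2,p_1^! F_1)$ for the diagonal embedding $\delta\cl M\hookrightarrow M\times M$ with projections $p_1,p_2$, identify $\rhom(p_2^{-1}F_2,p_1^! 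F_1)$ on $M\times M$ with an external-product realization of $\RD'_MF_2$ and $F_1$ (valid since $F_2$ is constructible), and observe that the hypothesis is exactly the non-characteristic condition for $\delta$ relative to this external product, so that Corollary~\ref{cor:opboim} together with Theorem~\ref{th:opb} delivers the isomorphism in one stroke.
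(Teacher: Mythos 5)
The paper states this corollary without proof (it is \cite[Prop.~5.4.14]{KS90}), so the comparison point is the standard argument there; with that in mind, your primary plan --- d\'evissage on $F_2$ through an adapted stratification --- has a genuine gap. The hypothesis $\musupp(F_2)\cap\musupp(F_1)\subset T^*_MM$ does not pass to the graded pieces $\eim{i_\alpha}L_\alpha$: Theorem~\ref{th:construct} (more precisely, the adaptedness of the stratification) gives $\musupp(F_2)\subset\bigcup_\alpha T^*_{M_\alpha}M$, \emph{not} the reverse inclusion you invoke, and the conormals to the strata are in general not contained in $\musupp(F_2)$ (already $F_2=\cor_{[0,\infty)}$ on $\R$ shows this). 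Worse, the conclusion itself fails for the pieces: take $M=\R$, $F_2=\cor_M$ (so the hypothesis is vacuous for every $F_1$), stratify by $\{0\}$ and $\R\setminus\{0\}$, and take $F_1=\cor_{[0,\infty)}$; for the piece $\cor_{\{0\}}$ one has $\RD_M'\cor_{\{0\}}\tens F_1\simeq\cor_{\{0\}}[-1]\neq0$ while $\rhom(\cor_{\{0\}},F_1)\simeq\rsect_{\{0\}}(F_1)\simeq0$, consistently with $\musupp(\cor_{\{0\}})\cap\musupp(F_1)\not\subset T^*_MM$. Since d\'evissage can only deduce the statement for $F_2$ \emph{from} the pieces, this route is a dead end; note also that the strata are merely locally closed, so even granting transversality, the reduction to a closed submanifold and Theorem~\ref{th:opb} would not cover the open strata.

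Your ``conceptual alternative'' is, by contrast, essentially the correct and standard proof, and should be the whole argument. Two points need fixing. First, the exact formula is $\rhom(F_2,F_1)\simeq\epb{\delta}\rhom(\opb{q_2}F_2,\epb{q_1}F_1)$, with $\epb{\delta}$ rather than $\opb{\delta}$; the passage from $\epb{\delta}$ to $\opb{\delta}\tens\omega_{M/M\times M}$ is precisely where Theorem~\ref{th:opb} and the hypothesis enter. Second, the role of constructibility of $F_2$ is to prove the K\"unneth-type identification $\rhom(\opb{q_2}F_2,\opb{q_1}F_1)\simeq F_1\etens\RD_M'F_2$ (a statement that fails for general $F_2$); this must be quoted or proved, e.g.\ by a local finite resolution of $F_2$ by sheaves $\cor_U$, $U$ subanalytic relatively compact. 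Given these, the bound $\musupp(F_1\etens\RD_M'F_2)\subset\musupp(F_1)\times\musupp(F_2)^a$ follows from Theorem~\ref{th:opb} and Corollary~\ref{cor:opboim} (whose hypothesis is automatic on $M\times M$, the two microsupports lying in complementary conormal directions), the non-characteristicity of $\delta$ for this set is exactly $\musupp(F_1)\cap\musupp(F_2)\subset T^*_MM$ since $T^*_\Delta(M\times M)=\{(x,x;\xi,-\xi)\}$, and after cancelling the invertible dualizing twists one checks the resulting isomorphism is the morphism \eqref{eq:elliso1}. (Equivalently, one may use the Sato-type triangle whose third term is $\muhom(F_2,F_1)$ outside the zero-section, killed by the hypothesis via $\supp\muhom(F_2,F_1)\subset\musupp(F_1)\cap\musupp(F_2)$.)
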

The link with the classical Petrowsky theorem for elliptic operators will be given in Corollary~\ref{cor:petrowshv3}.

\begin{remark}
(i) One can also give bounds to the microsupports of the sheaves obtained by the six operations without assuming any hypothesis of properness or transversality. See~\cite{KS90}*{Cor.~6.4.4,~6.4.5}.

\spa
(ii) By applying the results on the behaviour of the microsupport together with Theorem~\ref{th:construct}, one recovers easily the fact that the category of $\R$-constructible sheaves is stable with respect to the six operations (under suitable hypotheses of properness). See~\cite{KS90}*{\S~8.4}.
\end{remark}

\subsubsection*{Morse theory}
As an application of Theorem~\ref{th:oim}, one gets:
\begin{theorem}\label{th:Morse}
Let $F\in\Derb(\cor_M)$, let $\psi\cl M\to\R$ be a function of class $\Cd^1$ and assume that $\psi$
is proper on $\supp(F)$. Let $a,b\in\R$ with $a<b$
and assume that $d\psi(x)\notin\musupp(F)$ for $a\leq \psi(x)<b$. 
For $t\in\R$, set $M_t=\opb{\psi}(]-\infty,t[)$. Then the
restriction morphism  $\rsect(M_b;F)\to\rsect(M_a;F)$ is an isomorphism.
\end{theorem}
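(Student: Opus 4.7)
The plan is to reduce the claim to a one-dimensional statement by pushing $F$ forward along $\psi$, and then to derive the resulting one-dimensional propagation from the definition of the microsupport combined with an open-and-closed argument on $[a,b]$.

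First I would set $G \eqdot \roim{\psi}F \in \Derb(\cor_\R)$. Since $\psi$ is proper on $\supp F$, Theorem~\ref{th:oim} gives $\reim{\psi}F \isoto \roim{\psi}F$, and base change yields a natural isomorphism $\rsect(\opb{\psi}(\Omega); F) \simeq \rsect(\Omega; G)$ for every open $\Omega \subset \R$. In particular $\rsect(M_t; F) \simeq \rsect(]-\infty, t[; G)$, so the claim reduces to showing that the restriction $\rsect(]-\infty, b[; G) \to \rsect(]-\infty, a[; G)$ is an isomorphism. The same theorem bounds $\musupp(G) \subset \psi_\pi \opb{\psi_d}\musupp(F)$: explicitly, $(t; \tau) \in \musupp(G)$ forces the existence of $x$ with $\psi(x) = t$ and $\tau\, d\psi(x) \in \musupp(F)$. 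For $\tau > 0$, conicity of $\musupp(F)$ upgrades this to $d\psi(x) \in \musupp(F)$, which is excluded by hypothesis when $a \leq \psi(x) < b$. Hence $\musupp(G) \cap \{(t; \tau) \in T^*\R : a \leq t < b,\ \tau > 0\} = \varnothing$.

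For the one-dimensional propagation, I would apply Definition~\ref{def:SS} pointwise. For each $s \in [a, b)$, taking $\phi(t) = t - s$ (so $d\phi(s) = 1$ and $(s; 1) \notin \musupp(G)$) gives $(\rsect_{[s, +\infty[} G)_s \simeq 0$, hence an $\epsilon_s > 0$ with $\rsect(V_s; G) \isoto \rsect(V_s \cap ]-\infty, s[; G)$ for $V_s \eqdot ]s - \epsilon_s, s + \epsilon_s[$. Mayer--Vietoris applied to the cover $]-\infty, s + \epsilon_s[ = ]-\infty, s[ \cup V_s$ then upgrades this to $\rsect(]-\infty, s + \epsilon_s[; G) \isoto \rsect(]-\infty, s[; G)$. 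Letting $T \eqdot \{t \in [a, b] : \rsect(]-\infty, t[; G) \isoto \rsect(]-\infty, a[; G)\}$, one has $a \in T$; the displayed propagation makes $T$ open in $[a, b]$; and $T$ is closed because, for $t_n \uparrow t_0$ with $t_n \in T$, the compatibility of $\rsect$ with the increasing open union $]-\infty, t_0[ = \bigcup_n ]-\infty, t_n[$, combined with the fact that all restriction maps are isomorphisms, forces $\rsect(]-\infty, t_0[; G) \simeq \rsect(]-\infty, a[; G)$. By connectedness of $[a, b]$, $T = [a, b]$, and in particular $b \in T$.

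I expect the main obstacle to be this last globalization: the microsupport condition supplies only infinitesimal propagation around each $s \in [a, b)$, while the conclusion concerns the half-open window $[a, b[$ with its unattained right endpoint. The open-and-closed argument on the compact interval $[a, b]$ resolves this, relying on the fact that $\rsect$ commutes with increasing unions of open inclusions whose restriction maps are isomorphisms.
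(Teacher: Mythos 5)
Your reduction to dimension one is sound as far as it goes: base change gives $\rsect(M_t;F)\simeq\rsect(]-\infty,t[;G)$ for $G=\roim{\psi}F$, and the estimate $\musupp(G)\cap\{(t;\tau);a\leq t<b,\ \tau>0\}=\varnothing$ follows correctly from Theorem~\ref{th:oim} and conicity (modulo the small point that Theorem~\ref{th:oim} is stated for morphisms of manifolds, while $\psi$ is only assumed $\Cd^1$; the argument the paper relies on, via~\cite{KS90}, avoids this by never pushing forward). This is already a different route from the one behind the statement in the paper, which is proved in~\cite{KS90} by applying the non-characteristic deformation lemma (Prop.~2.7.2 of loc.\ cit.) directly on $M$ to the family $U_t=\opb{\psi}(]-\infty,t[)$, the properness of $\psi$ on $\supp(F)$ supplying the compactness hypotheses.

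The genuine gap is in your one-dimensional propagation step. From $(s;1)\notin\musupp(G)$, Definition~\ref{def:SS} gives only $(\rsect_{[s,+\infty[}G)_s\simeq0$, i.e.\ the vanishing of the filtered colimit $\varinjlim_{\eps\to0}\rsect_{[s,s+\eps[}(]s-\eps,s+\eps[;G)$. This does \emph{not} produce a fixed $\eps_s>0$ with $\rsect(V_s;G)\isoto\rsect(V_s\cap\,]-\infty,s[;G)$: vanishing of a colimit of complexes only says that each obstruction class dies after further shrinking, not that any single stage vanishes, and nothing in the definition rules out the bad behaviour occurring at every scale. Since your Mayer--Vietoris upgrade and the openness of the set $T$ rest entirely on this fixed-$\eps_s$ isomorphism, the argument as written does not close; in fact the fixed-neighborhood statement is essentially the theorem itself on a small interval, so invoking it here is circular. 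The standard repair is exactly the content of the deformation lemma: work with individual cohomology classes, for which the colimit vanishing \emph{does} kill the obstruction to extending a given class slightly to the right, and combine this with a supremum (Zorn) argument and, at limit points, the Mittag--Leffler argument you sketch for closedness (your closedness step is correct in substance, though ``compatibility of $\rsect$ with increasing unions'' should be phrased as the homotopy-limit/Milnor sequence with $\varprojlim^1$ vanishing because all transition maps are isomorphisms). With that class-by-class bookkeeping in place of the fixed-$\eps_s$ claim, your pushforward strategy does yield a complete proof, but as it stands the key local step is unjustified.
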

The classical Morse theorem corresponds to the constant sheaf $F=\cor_M$.

As an immediate corollary, one obtains:
\begin{corollary}\label{cor:Morse1}
Let $F\in\Derb(\cor_M)$ and let $\psi\cl M\to\R$ be a function of class $\Cd^1$. Set $\Lambda_\psi=\{(x;d\psi(x))\}$, a \lp non-conic in general\rp\, Lagrangian submanifold of $T^*M$.  Assume that 
$\supp(F)$ is compact
and
$\rsect(M;F)\not=0$. Then $\Lambda_\psi\cap\musupp(F)\not=\varnothing$.
\end{corollary}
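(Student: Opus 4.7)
The plan is to argue by contradiction, invoking Theorem~\ref{th:Morse} directly. Suppose on the contrary that $\Lambda_\psi\cap\musupp(F)=\varnothing$; this says precisely that $(x;d\psi(x))\notin\musupp(F)$ for every $x\in M$. Since $\supp(F)$ is compact and $\psi$ is continuous, the function $\psi$ is both proper on $\supp(F)$ and bounded on it. Hence we may choose real numbers $a<b$ with
\[
\supp(F)\subset\psi^{-1}\bl(a,b)\br,
\]
so that $M_a=\opb{\psi}((-\infty,a))$ is disjoint from $\supp(F)$ while $\supp(F)\subset M_b=\opb{\psi}((-\infty,b))$.

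With these choices, the hypotheses of Theorem~\ref{th:Morse} are satisfied on the interval $[a,b)$: the function $\psi$ is proper on $\supp(F)$, and $d\psi(x)\notin\musupp(F)$ holds for every $x\in M$, in particular whenever $a\leq\psi(x)<b$. The theorem then yields an isomorphism
\[
\rsect(M_b;F)\isoto\rsect(M_a;F).
\]
But $\supp(F)\subset M_b$ implies $\rsect(M_b;F)\simeq\rsect(M;F)$, which is nonzero by assumption, whereas $\supp(F)\cap M_a=\varnothing$ implies $F\vert_{M_a}\simeq 0$ and hence $\rsect(M_a;F)\simeq 0$. This contradicts the displayed isomorphism, so $\Lambda_\psi\cap\musupp(F)\neq\varnothing$. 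The only nontrivial input is Theorem~\ref{th:Morse} itself; once that is granted, the argument is essentially formal, the sole subtlety being to exploit the compactness of $\supp(F)$ to produce the bracketing values $a$ and $b$ and to pass from $\rsect(M_b;F)$ to $\rsect(M;F)$.
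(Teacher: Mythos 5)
Your proof is correct and is exactly the argument the paper intends: the corollary is stated as an immediate consequence of Theorem~\ref{th:Morse}, obtained by choosing $a<b$ bracketing $\psi(\supp(F))$ (possible since $\supp(F)$ is compact, which also gives properness of $\psi$ on $\supp(F)$) and comparing $\rsect(M_b;F)\simeq\rsect(M;F)\neq 0$ with $\rsect(M_a;F)\simeq 0$. No gaps; the bookkeeping with $M_a$, $M_b$ and the passage from $\rsect(M_b;F)$ to $\rsect(M;F)$ is handled correctly.
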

This corollary is an essential tool in a new proof of the Arnold non-displaceability theorem (see~\S~\ref{subsection:symp}).

\subsubsection*{Persistent homology and barcodes}
Persistent homology and barcodes are recent concrete applications of algebraic topology. They can easily be interpreted in the language of microlocal sheaf theory, as follows.

Consider a finite set $X=\bigcup_{i\in I}\{x_i\}\subset\R^n$ (a cloud). In order to understand its topology, one replaces each point $x_i\in X$ with a closed ball 
$B(x_i;t)$ of radius $t$ and looks at the topology of the set $X_t=\bigcup_{i\in I}B(x_i;t)$, more precisely one looks how this topology changes when $t$ goes from $0$ to $\infty$.
Set
\eqn
&&Z=\bigcup_{t\geq0} X_t\times\{t\}\subset\R^{n+1},
\eneqn
denote by $(x,t)$ the coordinates on $\R^{n+1}$,  by $(x,t;\xi,\tau)$ the associated coordinates on $T^*\R^{n+1}$ 
and  by $p\cl\R^{n+1}\to\R$ the projection $p(x,t)=t$. 

Let $\cor$ be a field and  consider the sheaf $\cor_Z$. Clearly, this sheaf is $\R$-constructible. Since the map $p$ is proper on $Z$, we get that $\roim{p}\cor_Z\in\Derb_\Rc(\cor_\R)$. Moreover, one has
\eq
&&\musupp(\cor_Z)\subset \{(x,t;\xi,\tau);\tau\geq0\}.
\eneq
To check it, one can argue by induction on the cardinality of $I$, using Corollary~\ref{cor:opboim}. 
In fact, this result holds true in a more general situation, replacing $X$ with a compact set and $\R^n$ with a Riemannian manifold. This follows from~\cite{KS90}*{Prop.~5.3.8}.

Since $p$ is proper on $Z$, we get by Theorem~\ref{th:oim} that
\eq\label{eq:taupositif}
&&\musupp(\roim{p}\cor_Z)\subset \{(t,\tau);\tau\geq0\}.
\eneq
A classical result of sheaf theory (see~\cite{Gu16}*{\S~6}) asserts that any $G\in\Derb(\cor_\R)$ is a finite direct sum of constant sheaves on intervals. 
By~\eqref{eq:taupositif}, one gets that there are finite many  intervals $[a_j,b_j)$, $a_j\geq0$, $a_j<b_j$, $0<b_j\leq\infty$ and integers 
$d_j\in\Z$ ($j\in J$, $J$ finite) such that 
\eq\label{eq:barcod}
&&\roim{p}\cor_Z\simeq \bigoplus_{j\in J}\cor_{[a_j,b_j)}\,[d_j].
\eneq
Here we may have $a_j=a_k, b_j=b_k, d_j=d_k$ for $j\neq k$. 
To the right-hand side of~\eqref{eq:barcod} we may associate a family of barcodes as follows. 
For each $d_j\in\Z$,  replace $[a_j,b_j)\,[d_j]$ with the vertical interval $[0,b_j)$ centered at $a_j\in\R$ (see~\cite{Gri08}).

\subsection{The functor $\muhom$}\label{section:muhom}
In~\cite{KS90}*{\S~IV.4}, Sato's microlocalization functor is generalized as follows.
The  functor $\muhom\cl \Derb(\cor_M)^\rop\times \Derb(\cor_M)\to\Derb(\cor_{T^*M})$
is given by
\eqn
&&\muhom(F_2,F_1)=\opb{\tw\delta}\mu_\Delta\rhom(\opb{q_2}F_2,\epb{q_1}F_1)
\eneqn
where $q_i$ ($i=1,2$) denotes the $i$-th projection on  $M\times M$,  
 $\Delta$ is the diagonal of $M\times M$ and  $\tw\delta$ the isomorphism
$\tw\delta\cl T^*M\isoto T^*_M(M\times M),\quad (x;\xi)\mapsto (x,x;\xi,-\xi)$.
One proves that:
\begin{itemize}
\item
$\roim{\pi_M}\muhom(F_2,F_1)\simeq\rhom(F_2,F_1)$,
\item
$\muhom(\cor_N,F)\simeq\mu_N(F)$  for $N$ a closed submanifold of $M$,
\item
$\supp\muhom(F_2,F_1)\subset\musupp(F_1)\cap\musupp(F_2)$.
\end{itemize}

The functor $\muhom$ is  the functor of microlocal morphisms. Let us make this assertion more precise. 

Let $Z$ be a locally closed  subset of $T^*M$. One denotes by 
$\Derb(\cor_M;Z)$ the localization of $\Derb(\cor_M)$ by its full triangulated subcategory consisting of objects $F$ such that $\musupp(F)\cap Z=\varnothing$. The objects of $\Derb(\cor_M;Z)$ are those of $\Derb(\cor_M)$ but a
morphism $u\cl F_1\to F_2$ in $\Derb(\cor_M)$ becomes an isomorphism
in $\Derb(\cor_M;Z)$ if, after embedding  this morphism in a distinguished
triangle $F_1\to F_2\to F_3\to[+1]$, one has $\musupp(F_3)\cap Z=\varnothing$. 

One shall be aware that the prestack (a prestack is, roughly speaking, a presheaf of categories)
$U\mapsto \Derb(\cor_M;U)$ ($U$ open in $T^*M$)
is not a stack, not even a separated prestack. 
The functor $\muhom$ induces a bifunctor (we keep the same notation):
\eqn
&&\muhom\cl\Derb(\cor_M;U)^\rop\times\Derb(\cor_M;U)\to\Derb(\cor_{U}).
\eneqn
and for $F_1, F_2, F_3\in\Derb(\cor_M;U)$,  there is a natural morphism 
\eq\label{eq:compmuhom}
&&\muhom(F_3,F_2)\ltens\muhom(F_2,F_1)\to\muhom(F_3,F_1).
\eneq
Moreover, for $p\in T^*M$, one has an isomorphism 
\eq
&&\muhom(F_2,F_1)_p\simeq\Hom[{\Derb(\cor_M;\{p\})}](F_2,F_1)
\eneq
and~\eqref{eq:compmuhom} is compatible with the composition of morphisms in $\Derb(\cor_M;\{p\})$. 
This shows that, in some sense, $\muhom$ is kind of internal hom for  $\Derb(\cor_M;U)$.

Now let $\Lambda$ be a smooth conic submanifold closed in $U$. Denote by 
$\Derb_\Lambda(\cor_M;U)$ the full subcategory of $\Derb(\cor_M;U)$ consisting of objects 
$F\in\Derb(\cor_M)$ satisfying $\musupp(F)\cap U\subset\Lambda$.  
\begin{definition}\label{def:simpleshv}
Assume  that $\cor$ is a field. 
One says that $F\in \Derb_\Lambda(\cor_M;U)$ is simple along $\Lambda$ if the natural morphism 
$\cor_\Lambda\to\muhom(F,F)$ is an isomorphism.  One denotes by  $\Simple(\Lambda,\cor)$ the  subcategory of  $\Derb_\Lambda(\cor_{M};U)$ consisting of simple sheaves. 
\end{definition}
We shall see in~\S~\ref{subsection:symp} that simple sheaves play an important role in symplectic topology.

\subsubsection*{Microlocal Serre functor}
Recall first what a Serre functor is, a notion introduced in~\cite{BK89}. Consider linear triangulated category $\sht$  over a field $\cor$ and assume that the spaces $\bigoplus_{n\in\Z}\Hom[\sht](A,B\,[n])$ are finite dimensional for any $A,B\in\sht$. 
A Serre functor $S$ is an endofunctor $S$ of $\sht$ together with a  functorial (in $A$ and $B$) isomorphism :
\eqn
&&\Hom[\sht](A,B)^*\simeq\Hom[\sht](B,S(A)).
\eneqn
Here ${}^*$ denotes the duality functor for vector spaces. 

This definition is motivated by the example of the category $\Derb_\coh(\sho_X)$ of coherent $\sho_X$-modules on a complex compact manifold $X$. In this case, a theorem of Serre asserts that 
the Serre functor is given by $\shf\mapsto\shf\tens[\sho_X]\Omega_X\,[d_X]$, where $\Omega_X$ is the sheaf of holomorphic forms of maximal degree and $d_X$ is  the complex dimension of $X$.

There is an interesting phenomenon which holds with $\muhom$ and not with $\rhom$.  Indeed,  although the category $\Derb_{\Rc}(\cor_{M})$  
does not admit a Serre functor, it admits 
a kind of microlocal Serre functor, as shown by the isomorphism, functorial in $F_1$ and $F_2$ 
(see~\cite[Prop.~8.4.14]{KS90}):
\eqn
&&\RD_{T^*M}\muhom(F_2,F_1)\simeq\muhom(F_1,F_2)\tens\opb{\pi_M}\omega_M.
\eneqn
Here, $\omega_M\simeq\ori_M\,[\dim M]$ is the dualizing complex on $M$. 

This confirms the fact that to fully understand $\R$-constructible sheaves, it is natural to look at them 
microlocally, that is, in $T^*M$. This is also in accordance with the ``philosophy'' of Mirror Symmetry which interchanges the category of coherent $\sho_X$-modules on a complex manifold $X$ with the Fukaya category on a symplectic manifold $Y$. 
(See \S~\ref{subsection:symp}).

\section{Some applications}\label{section:applications}

\subsection{Solutions of D-modules}\label{subsection:Dmod}
 We shall first briefly present  some applications of microlocal sheaf theory to systems of linear partial differential equations (LPDE), that is, (generalized) holomorphic solutions of D-modules. This was the original motivation of the theory. The main tool 
is a theorem which asserts  that given a coherent D-module $\shm$ on a complex manifold $X$, the microsupport of the complex of the holomorphic solutions of $\shm$ is contained in  the characteristic variety of the system.  (In fact it is equal, but the other inclusion is not so useful.) This theorem is deduced from the Cauchy-Kowalevska theorem in its precise form given by Petrowsky and Leray, and is the unique tool of analysis which is used thereafter. With this result, the study of generalized solutions of systems of LPDE reduces most of the time  to a geometric study, the relations between the microsupport of the constructible sheaf associated with the space of generalized functions ({\em e.g.,} the conormal bundle $T^*_MX$ to a real manifold $M$) and the characteristic variety of the system. 
We shall only study here the particular case of elliptic systems. 

\subsubsection*{D-modules}
We have seen at the end of~\S~\ref{subsection:abel} that a system of linear equations over a sheaf of rings $\shr$ is nothing but an $\shr$-module locally of finite presentation.
We shall consider here the case where $X$ is a complex manifold and $\shr$ is the sheaf $\shd_X$ of holomorphic differential operators. References for D-modules are made to~\cite{Ka70, Ka03}.

Let $X$ be a complex manifold.
One denotes by $\shd_X$  the sheaf of rings of holomorphic (finite order) differential
operators. It is a right and left coherent ring. A system of linear partial differential equations on $X$ is   thus a 
left coherent $\shd_X$-module $\shm$. 
Locally on $X$, $\shm$ may be represented as the cokernel 
of a matrix $\cdot P_0$ of differential operators acting on the right: 
\eqn
&&\shm\simeq \shd_X^{N_0}/\shd_X^{N_1}\cdot P_0.
\eneqn
By classical arguments of analytic geometry (Hilbert's syzygy
theorem), one shows that 
$\shm$ is locally isomorphic to the cohomology of a bounded complex
\eq\label{eq:globalpresent}
&&\shm^\bullet\eqdot
0\to \shd_X^{N_r}\to\cdots\to\shd_X^{N_1}\to[\cdot P_0]\shd_X^{N_0}\to 0.
\eneq
Clearly, the sheaf $\sho_X$ of holomorphic functions is a left $\shd_X$-module. It is coherent since $\sho_X\simeq\shd_X/\shi$ where $\shi$ is the left ideal generated by the vector fields. 
For  a coherent $\shd_X$-module $\shm$, one sets for short 
\eqn
&&\Sol(\shm)\eqdot\rhom[\shd_X](\shm,\sho_X).
\eneqn
Representing (locally) $\shm$ by a bounded complex $\shm^\bullet$ as above, 
we get
\eq\label{eq:solm}
&&\Sol(\shm)\simeq
0\to \sho_X^{N_0}\to[P_0\cdot]\sho_X^{N_1}\to\cdots\sho_X^{N_r}\to 0,
\eneq
where now $P_0\cdot$ operates on the left.

\subsubsection*{Characteristic variety}

One defines naturally the characteristic variety of $\shm$, 
denoted $\chv(\shm)$, a closed complex analytic subset of $T^*X$, 
conic with respect to the action of $\C^\times$ on $T^*X$.
For example, if $\shm$ has a single generator $u$ with relation $\shi u=0$, where
$\shi$ is a locally finitely generated left ideal of $\shd_X$, then 
\eqn
&&\chv(\shm)=\{(z;\zeta)\in T^*X; \sigma(P)(z;\zeta)=0\mbox{ for all }P\in\shi\},
\eneqn
where $\sigma(P)$ denotes the principal symbol of $P$.

The fundamental  result below was first obtained in~\cite{SKK73}.
\begin{theorem}
Let $\shm$ be a coherent $\shd_X$-module. Then 
$\chv(\shm)$ is a closed conic  complex analytic {\em involutive} \lp {\em i.e.,} co-isotropic\rp\, subset of $T^*X$. 
\end{theorem}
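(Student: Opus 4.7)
The plan is to separate the easy topological and analytic claims from the substantive claim of involutivity. That $\chv(\shm)$ is closed, $\C^\times$-conic and complex analytic is immediate from the definition: locally on $X$, writing $\shm \simeq \shd_X^{N_0}/\sum_i \shd_X \cdot P_i$ for suitable differential operators $P_i$, the set $\chv(\shm)$ is the common zero locus of the principal symbols $\sigma(P_i)$, which are holomorphic functions on $T^*X$ homogeneous in the cotangent fibers. So the heart of the theorem is the involutivity.

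For involutivity, I would reduce to the general sheaf-theoretic statement that the microsupport of any complex of sheaves is co-isotropic, that is, Theorem~\ref{th:sscoisotropic}. The bridge is the identification
\[
\chv(\shm) \;=\; \musupp\bigl(\Sol(\shm)\bigr) \;=\; \musupp\bigl(\rhom[\shd_X](\shm,\sho_X)\bigr),
\]
with $\Sol(\shm)$ viewed as an object of $\Derb(\C_X)$. Applying Theorem~\ref{th:sscoisotropic} to $X$ regarded as a real $C^\infty$ manifold yields co-isotropy of $\chv(\shm)$ for the real symplectic form on $T^*X$; since $\chv(\shm)$ is a complex analytic subvariety, co-isotropy with respect to the holomorphic symplectic form follows automatically, because at a smooth point the real Hamiltonian flow of a holomorphic defining function coincides (up to the standard real/complex dictionary) with its holomorphic Hamiltonian flow.

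The main obstacle is thus the inclusion $\musupp(\Sol(\shm)) \subseteq \chv(\shm)$. After unwinding Definition~\ref{def:SS} and using a local free presentation \eqref{eq:globalpresent}, this inclusion amounts to the following propagation statement: whenever $(z_0;\zeta_0) \notin \chv(\shm)$ and $\phi$ is a real $C^1$ function with $\phi(z_0)=0$ and $d\phi(z_0)=\zeta_0$, any holomorphic solution of $\shm$ defined on $\{\phi<0\}$ near $z_0$ extends uniquely to a holomorphic solution in a neighborhood of $z_0$. This is exactly the content of the Cauchy--Kowalevska theorem in the precise form given by Petrowsky and Leray, and it is the sole analytic input; the paper explicitly flags it as such. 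Granting this, the theorem follows formally from Theorem~\ref{th:sscoisotropic}.

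Alternatively, and bypassing sheaf theory entirely, one can use Gabber's purely algebraic proof: take a local good filtration $F_\bullet\shm$ so that $\gr^F\shm$ is coherent over $\gr\shd_X \simeq \on{Sym}_{\sho_X} T_X$, observe that $\chv(\shm)$ is the support of $\gr^F\shm$ in $T^*X$, and invoke Gabber's theorem on involutivity of the characteristic variety of any coherent module over a filtered ring whose associated graded is a commutative Noetherian Poisson algebra, the Poisson bracket on $\gr\shd_X$ being the one induced by the canonical symplectic form on $T^*X$. This route avoids the Cauchy--Kowalevska input but loses the microlocal sheaf-theoretic perspective that is the subject of the paper.
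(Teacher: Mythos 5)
Your main route is the one the paper itself points to (Theorem~\ref{th:ssinchar1} combined with Theorem~\ref{th:sscoisotropic}), but as written it has a genuine gap. You establish (via Cauchy--Kowalevska in its precise form) only the inclusion $\musupp(\Sol(\shm))\subset\chv(\shm)$ and then assert that the theorem ``follows formally''. It does not: that inclusion, together with the co-isotropy of microsupports, proves that $\musupp(\Sol(\shm))$ is co-isotropic, but it says nothing about points of $\chv(\shm)$ that might a priori lie outside $\musupp(\Sol(\shm))$, and a set containing a co-isotropic set need not be co-isotropic at those extra points. To conclude involutivity of $\chv(\shm)$ you need the full equality of Theorem~\ref{th:ssinchar1}, and the missing inclusion $\chv(\shm)\subset\musupp(\Sol(\shm))$ is not formal: as the paper states, it rests on the theorem of \cite{SKK73} that the ring $\shd_X^\infty$ of infinite order differential operators is faithfully flat over $\shd_X$. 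So your claim that Cauchy--Kowalevska is the sole analytic input is accurate for bounding microsupports of solution complexes (which is why the paper says the reverse inclusion is ``not so useful'' for applications to LPDE), but it is not accurate for the involutivity theorem, where the reverse inclusion is exactly what is needed.

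Two smaller remarks. Your description of $\chv(\shm)$ as the common zero locus of the principal symbols of a chosen finite set of local relations is not correct in general: the characteristic variety is defined via a good filtration as the support of $\gr\shm$ (for a cyclic module $\shd_X/\shi$ one must take the zeros of $\sigma(P)$ for \emph{all} $P\in\shi$, not only for generators of $\shi$), and closedness, conicity and analyticity follow from coherence of the associated graded module; the conclusion is still standard, but not ``immediate'' in the way you state it. Also note that the paper itself offers no proof of this theorem: it cites the original argument of \cite{SKK73} (microdifferential operators of infinite order and quantized contact transformations), Gabber's algebraic proof, and the microlocal sheaf-theoretic route; your appeal to Gabber is thus a legitimate alternative, but, like the paper's, it is a citation rather than an argument. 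Your passage from real to complex involutivity for a $\C^\times$-conic complex analytic subset is fine.
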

The proof  of the involutivity is really difficult:  it uses microdifferential operators of infinite order and quantized contact transformations. 
Later, Gabber \cite{Ga81} gave a purely algebraic (and much simpler) proof of this result. 
Theorem~\ref{th:ssinchar1} below together with Theorem~\ref{th:sscoisotropic} gives another totally different proof of the involutivity.

\begin{theorem}[{See~\cite[Th.~11.3.3]{KS90}}]\label{th:ssinchar1}
Let $\shm$ be a coherent $\shd_X$-module. Then 
\eq\label{eq:SS=chv}
&&\musupp(\Sol(\shm))=\chv(\shm).
\eneq
\end{theorem}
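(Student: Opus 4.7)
I would split the proof into the two inclusions $\subset$ and $\supset$, which are of quite different natures. The inclusion $\musupp(\Sol(\shm)) \subset \chv(\shm)$ is the analytic heart of the theorem and is the one which, combined with Theorem~\ref{th:sscoisotropic}, yields a new proof of the involutivity of $\chv(\shm)$. The reverse inclusion $\supset$ is a refinement, obtained by microdifferential/contact-geometric techniques.

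For the inclusion $\subset$: fix $p_0=(x_0;\xi_0)\in T^*X$ with $p_0\notin \chv(\shm)$. By Definition~\ref{def:SS} I must show that for every real $C^1$-function $\phi$ defined near $x_0$ with $\phi(x_0)=0$ and $d\phi(x_0)=\xi_0$, one has $(\rsect_{\{x;\phi(x)\geq0\}}\Sol(\shm))_{x_0}\simeq 0$. Working locally, represent $\shm$ by a finite free resolution~\eqref{eq:globalpresent}, so that $\Sol(\shm)$ is computed by the Koszul-type complex~\eqref{eq:solm}. Writing $U=\{\phi<0\}$, $j\cl U\into X$, the excision distinguished triangle
\[
\rsect_{\{\phi\geq0\}}\Sol(\shm)\to \Sol(\shm)\to \roim{j}\opb{j}\Sol(\shm)\to[+1]
\]
reduces the required vanishing at $x_0$ to the statement that every germ of derived holomorphic solution of $\shm$ defined on a half-neighborhood $U\cap V$ of $x_0$ extends uniquely to a full neighborhood of $x_0$. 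This is exactly the content of the Cauchy--Kowalevska theorem in its sharp form due to Leray--Petrowsky, extended to systems by Kashiwara: when $\xi_0\notin\chv(\shm)$ the real hypersurface $\{\phi=0\}$ is non-characteristic for the complex~\eqref{eq:solm} at $x_0$, and holomorphic solutions propagate across it. This is the only analytic input of the whole argument.

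For the inclusion $\supset$: given $p_0=(x_0;\xi_0)\in\chv(\shm)$, I would use the sheaf $\she_X$ of microdifferential operators on $T^*X$ and a quantized contact transformation bringing $\she_X\tens[\shd_X]\shm$ locally, near $p_0$, to a normal form, typically a partial de~Rham-type complex transverse to the codirection $\xi_0$. In such a normal form one exhibits an explicit holomorphic (or micro-) solution that fails to propagate across any real hypersurface tangent to $\ker\xi_0$ at $x_0$; this forces $p_0\in\musupp(\Sol(\shm))$. Since $\musupp$ is closed, one concludes $\chv(\shm)\subset\musupp(\Sol(\shm))$.

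The principal obstacle is the $\subset$ direction and specifically its reliance on the sharp Cauchy--Kowalevska--Leray--Petrowsky theorem with precise geometric control on the domain of existence of the holomorphic solution: all of the microlocal sheaf-theoretic machinery is formal, but one genuinely needs this analytic existence-and-uniqueness result to cross from the characteristic variety to the microsupport. Once~\eqref{eq:SS=chv} is established, Theorem~\ref{th:sscoisotropic} immediately gives the Sato--Kashiwara--Kawai involutivity of $\chv(\shm)$ as a formal corollary.
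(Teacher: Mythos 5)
Your first half is exactly the paper's route: the inclusion $\musupp(\Sol(\shm))\subset\chv(\shm)$ is indeed reduced, via the definition of the microsupport and a local free resolution~\eqref{eq:globalpresent}, to propagation of holomorphic solutions across a non-characteristic real hypersurface, and the paper states explicitly that the precise Cauchy--Kowalevska theorem (Leray--Petrowsky form) is the only analytic input there. The point of divergence, and the genuine gap, is your treatment of the reverse inclusion $\chv(\shm)\subset\musupp(\Sol(\shm))$. You propose to take an arbitrary $p_0\in\chv(\shm)$, apply a quantized contact transformation to put $\she_X\tens[\shd_X]\shm$ into a normal form (``a partial de Rham-type complex''), and exhibit a non-propagating solution. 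But no such normal form exists at an arbitrary point of the characteristic variety: the structure theorems of SKK-type hold only at suitable generic points (smooth points of $\chv(\shm)$, simple characteristics or further regularity hypotheses), and a general coherent module can have multiplicities, a non-reduced or singular characteristic variety, and microlocal structure far from a de Rham system. Your closing appeal to the closedness of $\musupp$ hints at a density argument (prove it on a dense subset of each irreducible component and take closures), but then the burden is precisely the structure theorem at generic points for arbitrary coherent $\she_X$-modules, which you do not supply and which is a deep result in its own right, not a routine reduction.

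The paper's indicated argument avoids normal forms altogether: it invokes the theorem of~\cite{SKK73} that the ring $\shd_X^\infty$ of infinite-order differential operators is faithfully flat over $\shd_X$. Schematically, if $p\notin\musupp(\Sol(\shm))$, the microlocal solution complex (the $\muhom$-theoretic, infinite-order microlocalization of $\shm$) vanishes near $p$; faithful flatness then lets one descend this vanishing from the infinite-order to the finite-order microlocalized module, whose support is $\chv(\shm)$, so $p\notin\chv(\shm)$. This works uniformly at every point, with no genericity or contact-geometric reduction. If you want to keep your approach, you must either restrict to points where a normal form theorem is actually available and justify the density/closure step for arbitrary coherent $\shm$, or replace that step by the faithful-flatness argument; as written, the $\supset$ half does not go through.
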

The only analytic  tool in the proof of the inclusion $*\subset *$ in~\eqref{eq:SS=chv}
is the classical Cauchy-Kowalevska theorem, in its precise form (see~\cite{Ho83}*{\S~9.4}). To prove the reverse inclusion, one uses a theorem of~\cite{SKK73} which asserts that the ring $\shd_X^\infty$  of infinite order differential operators is faithfully flat over $\shd_X$.

\subsubsection*{Elliptic pairs}

Let us apply Corollary~\ref{cor:petrovsky}  when $X$ is a complex manifold.
For $G\in\Derb_\Rc(\C_X)$, set
\eqn
&&\sha_G\eqdot\sho_X\tens G, \quad \shb_G\eqdot\rhom(\RD_X'G,\sho_X).
\eneqn
Note that if $X$ is the complexification of a real analytic manifold $M$ and we choose $G=\C_M$, we 
recover the sheaf of real analytic functions and the sheaf of hyperunctions:
\eqn
&&\sha_{\C_M}=\sha_M,\quad \shb_{\C_M}=\shb_M.
\eneqn
Now let $\shm\in\Derb_\coh(\shd_X)$. 
According to~\cite{ScSn94}, one says that the pair $(G,\shm)$ is elliptic if 
$\chv(\shm)\cap\musupp(G)\subset T^*_XX$. 

\begin{theorem}[{\cite{ScSn94}}]  \label{th:petrowshv2}
Let $(\shm,G)$ be an elliptic pair. 
\banum
\item
We have the canonical isomorphism:
\eq\label{eq:solasolG}
&&\rhom[\shd_X](\shm,\sha_G)\isoto\rhom[\shd_X](\shm,\shb_G).
\eneq
\item
Assume moreover that $\supp(\shm)\cap\supp(G)$ is compact and $\shm$ admits a global presentation as in~\eqref{eq:globalpresent}.
Then the cohomology of the complex 
$\RHom[\shd_X](\shm,\sha_G)$
is finite dimensional.
\eanum
\end{theorem}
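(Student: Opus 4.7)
The entire argument rests on the single analytic input of Theorem~\ref{th:ssinchar1}: $\musupp(\Sol(\shm)) = \chv(\shm)$. Set $\Sol \eqdot \Sol(\shm)$. Since $G$ is a sheaf of $\C_X$-modules carrying no $\shd_X$-action, two standard tensor--hom adjunctions yield
\[ \rhom[\shd_X](\shm, \sho_X \tens G) \simeq \Sol \tens G, \qquad \rhom[\shd_X](\shm, \rhom(\RD'_X G, \sho_X)) \simeq \rhom(\RD'_X G, \Sol), \]
so that both statements of the theorem translate into purely sheaf-theoretic assertions about $\Sol$ and $G$ on $X$.

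For part~(a), the plan is to apply Corollary~\ref{cor:petrovsky} with $F_1 = \Sol$ and $F_2 = \RD'_X G$: its conclusion, combined with the biduality $\RD'_X \RD'_X G \simeq G$ (valid because $G$ is $\R$-constructible over a field), produces exactly the claimed isomorphism $\Sol \tens G \isoto \rhom(\RD'_X G, \Sol)$. The microsupport-disjointness hypothesis of the corollary comes free of charge: $\musupp(\Sol) = \chv(\shm)$ is $\C^\times$-conic and therefore stable under the antipodal involution, so the ellipticity condition $\chv(\shm) \cap \musupp(G) \subset T^*_XX$ automatically implies the corresponding condition with $G$ replaced by $\RD'_X G$. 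Modulo these adjunctions, part~(a) is formal once Theorem~\ref{th:ssinchar1} is in hand.

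For part~(b), taking $\rsect(X;-)$ of the isomorphism of~(a) produces $\RHom[\shd_X](\shm, \sha_G) \simeq \rsect(X; \Sol \tens G)$, a complex whose cohomology sheaves are supported in the compact set $\supp(\shm) \cap \supp(G)$. The global presentation~\eqref{eq:globalpresent} represents $\Sol$ by a bounded complex of free $\sho_X$-modules, so finite-dimensionality reduces to that of $\rsect(X; E^\scbul \tens G)$ for a bounded complex $E^\scbul$ with terms $\sho_X^{N_i}$. My strategy is to couple the microsupport bound $\musupp(\Sol \tens G) \subset \chv(\shm) + \musupp(G)$ supplied by Corollary~\ref{cor:opboim}(i), whose hypothesis is again guaranteed by ellipticity and antipodal invariance of $\chv(\shm)$, with the classical finiteness theorem for compactly supported $\R$-constructible sheaves.

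The main obstacle is precisely this last constructibility step: the microsupport sum $\chv(\shm) + \musupp(G)$ is not Lagrangian in general, so establishing $\R$-constructibility of the cohomology of $\Sol \tens G$ requires an extra geometric input, namely that over $\supp(\shm) \cap \supp(G)$ this sum is enveloped by a subanalytic Lagrangian. A parallel analytic route is available: equip $\rsect$ of the coherent terms $\sho_X^{N_i}$ with their natural Fr\'echet--Schwartz topologies and use ellipticity to construct a parametrix on a neighbourhood of $\supp(\shm) \cap \supp(G)$ turning the differentials $P_i\cdot$ into Fredholm operators. In either route, ellipticity enters~(b) in a strictly stronger way than it enters~(a).
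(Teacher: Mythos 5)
Your part~(a) is correct and is exactly the paper's argument: after the standard identifications (via local free resolutions of $\shm$) $\rhom[\shd_X](\shm,\sha_G)\simeq\Sol(\shm)\tens G$ and $\rhom[\shd_X](\shm,\shb_G)\simeq\rhom(\RD'_XG,\Sol(\shm))$, one applies Corollary~\ref{cor:petrovsky} with $F_1=\Sol(\shm)$ and $F_2=\RD'_XG$, the hypothesis being supplied by Theorem~\ref{th:ssinchar1} together with the antipodal invariance of the $\C^\times$-conic set $\chv(\shm)$, and biduality for the $\R$-constructible complex $G$.

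Part~(b), however, contains a genuine gap: the idea that actually produces finiteness is missing, and neither of your two routes replaces it. The constructibility route cannot work. The stalks of $\Sol(\shm)$ are spaces of germs of holomorphic solutions, infinite dimensional in general (already for $\shm=\shd_X/\shd_XP$ with $P$ a single elliptic operator), so $\Sol(\shm)\tens G$ is not $\R$-constructible; equivalently, by Theorem~\ref{th:construct} its microsupport would have to lie in a conic subanalytic Lagrangian, whereas $\chv(\shm)$ is co-isotropic, of real dimension $>\dim_\R X$ in general — take $G=\C_X$, an elliptic pair for any coherent $\shm$, in which case the sheaf in question is $\Sol(\shm)$ itself. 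So no ``enveloping Lagrangian'' exists, and even weak $\R$-constructibility would not give finite-dimensional stalks, hence the finiteness theorem for $\R$-constructible sheaves is simply not applicable. The parametrix route is only a gesture: you do not construct it, and it is unclear what Fredholmness should mean on sections of $\sho_X^{N_i}\tens G$ for an arbitrary constructible $G$; nothing of the sort is needed. The proof in the paper (following \cite{ScSn94}) uses (a) itself as the analytic engine: one represents $G$ by a bounded complex whose components are finite direct sums of sheaves $\cor_U$ with $U$ open, subanalytic and relatively compact; then the global sections of the left-hand side of \eqref{eq:solasolG} form a complex of topological vector spaces of type DFN, those of the right-hand side a complex of type FN, and since (a) identifies the cohomology of these two complexes, classical results of functional analysis on morphisms between FN and DFN complexes force finite dimensionality. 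In particular, your closing claim that ellipticity enters (b) ``in a strictly stronger way'' than (a) is the reverse of the actual situation: (b) uses no ellipticity beyond the isomorphism (a); the additional input is topological vector space theory, which is absent from your plan.
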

\begin{proof}
(a) This is a particular case of Corollary~\ref{cor:petrovsky}.

\spa
(b) One first shows that one may represent $G$ with a bounded complex whose components are  a finite direct sum of sheaves of the type 
$\cor_U$ for $U$ open subanalytic relatively compact in $X$. 
Then one can represent  the left hand side of
the global sections of \eqref{eq:solasolG}
by a complex of topological vector spaces of type DFN and the right hand side 
by a complex of topological vector spaces of type FN. The finiteness follows by classical results of functional analysis. 
\end{proof}

Let us particularize Theorem~\ref{th:petrowshv2} to the usual case of an elliptic system. Let $M$ be a real anaytic manifold, $X$ a complexification of $M$ and let us choose  $G=\RD'_X\C_M$. Then $(G,\shm)$ is an elliptic pair if and only if
\eq\label{eq:elliptic}
&&T^*_MX\cap\chv(\shm)\subset T^*_XX.
\eneq
In this case, one simply says that $\shm$ is an elliptic system. Then one recovers a classical result:

\begin{corollary}  \label{cor:petrowshv3}
Let $\shm$ be an elliptic system.
\banum
\item
We have the canonical isomorphism:
\eq\label{eq:solasolM}
&&\rhom[\shd_X](\shm,\sha_M)\isoto\rhom[\shd_X](\shm,\shb_M).
\eneq
\item
Assume moreover that $M$ is compact and $\shm$ admits a global presentation as in~\eqref{eq:globalpresent}.
Then the cohomology of the complex 
$\RHom[\shd_X](\shm,\sha_M)$
is finite dimensional.
\eanum
\end{corollary}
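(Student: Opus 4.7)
The corollary is a direct specialization of Theorem \ref{th:petrowshv2} applied to the $\R$-constructible sheaf $G = \C_M$. First I would identify the ingredients. Since $M$ is a closed real-analytic submanifold of $X$, the sheaf $\C_M$ lies in $\Derb_\Rc(\C_X)$, and by Example \ref{exa:SSi}(ii) one has $\musupp(\C_M) = T^*_MX$. Consequently the ellipticity condition for the pair $(\shm,\C_M)$ in the sense preceding Theorem \ref{th:petrowshv2}, namely $\chv(\shm)\cap\musupp(\C_M)\subset T^*_XX$, is precisely \eqref{eq:elliptic}; thus the hypothesis that $\shm$ be elliptic is exactly the hypothesis that $(\shm,\C_M)$ be an elliptic pair. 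Next, unwinding the definitions $\sha_G\eqdot\sho_X\tens G$ and $\shb_G\eqdot\rhom(\RD_X'G,\sho_X)$, one reads off
\[
\sha_{\C_M}=\sho_X\tens\C_M=\sha_M,\qquad \shb_{\C_M}=\rhom(\RD_X'\C_M,\sho_X)=\shb_M,
\]
where the second identification is formula \eqref{eq:hyp2}.

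For part (a), I would then apply Theorem \ref{th:petrowshv2}(a) to the elliptic pair $(\shm,\C_M)$, which gives at once the isomorphism
\[
\rhom[\shd_X](\shm,\sha_M)\simeq\rhom[\shd_X](\shm,\sha_{\C_M})\isoto\rhom[\shd_X](\shm,\shb_{\C_M})\simeq\rhom[\shd_X](\shm,\shb_M),
\]
which is \eqref{eq:solasolM}.

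For part (b), I would check the supplementary hypotheses of Theorem \ref{th:petrowshv2}(b): one requires $\supp(\shm)\cap\supp(G)$ to be compact and $\shm$ to admit a global presentation as in \eqref{eq:globalpresent}. Here $\supp(\C_M)=M$ is compact by assumption, so $\supp(\shm)\cap M$ is automatically compact as a closed subset of $M$, while the global presentation is supplied by hypothesis. Part (b) of Theorem \ref{th:petrowshv2} then yields the finite-dimensionality of the cohomology of $\RHom[\shd_X](\shm,\sha_M)$.

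The argument is essentially definitional: there is no genuine obstacle beyond pattern-matching. The only point that might require a moment of reflection is the passage through the sheaf-theoretic notion of ellipticity --- matching $\musupp(\C_M)=T^*_MX$ with the classical condition \eqref{eq:elliptic} --- and the verification that the abstract functors $\sha_G,\shb_G$ specialize on the nose to the classical sheaves $\sha_M$ and $\shb_M$. Both are immediate once the right $G$ is chosen.
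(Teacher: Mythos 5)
Your proof is correct and is essentially the paper's own argument: the corollary is obtained by specializing Theorem~\ref{th:petrowshv2} to a constructible sheaf $G$ with $\musupp(G)=T^*_MX$, so that ellipticity of the pair becomes \eqref{eq:elliptic}, and then invoking the identifications $\sha_{\C_M}=\sha_M$, $\shb_{\C_M}=\shb_M$ together with compactness of $M$ for the finiteness statement. The only (immaterial) difference is that the paper writes $G=\RD'_X\C_M$ while you take $G=\C_M$; since $\RD'_X\C_M\simeq\ori_M[-\dim M]$ these choices differ only by a twist and shift, give the same ellipticity condition, and your choice is the one that matches the paper's own remark $\sha_{\C_M}=\sha_M$, $\shb_{\C_M}=\shb_M$ on the nose.
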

There is a more precise result than Corollary~\ref{cor:petrowshv3}~(a), due to Sato~\cite{Sa70}.
\begin{proposition}\label{pro:WF}
Let $U\subset T^*_MX$ be an open subset, let $\shm$ be a coherent $\shd_X$-module, let $j\in\Z$ and let $u\in\sect(U;\ext[\shd_X]{j}(\shm,\shc_M))$. Then 
$\supp(u)\subset U\cap\chv(\shm)$. In particular, if $u\in\Hom[\shd_X](\shm,\shb_M)$, then
$\WF(u)\subset  T^*_MX\cap\chv(\shm)$. 
\end{proposition}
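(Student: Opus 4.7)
The plan is to reduce the support estimate for $\ext[\shd_X]{j}(\shm,\shc_M)$ to the identity $\musupp(\Sol(\shm))=\chv(\shm)$ of Theorem~\ref{th:ssinchar1}, by identifying the complex $\rhom[\shd_X](\shm,\shc_M)$ with $\mu_M(\Sol(\shm))$ up to a shift and an orientation twist.

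The first step is to rewrite $\shc_M$ as an object of $\Derb(\C_{T^*_MX})$. Using the defining formula~\eqref{eq:hyp} together with the vanishing of $H^j\mu_M(\sho_X)$ for $j\neq n$, one has $\shc_M\simeq\mu_M(\sho_X)[n]\tens\opb{\pi_M}\ori_M$, so that applying $\rhom[\shd_X](\shm,\scbul)$ reduces the problem to the natural isomorphism
\eqn
&&\rhom[\shd_X](\shm,\mu_M\sho_X)\simeq\mu_M(\Sol(\shm)).
\eneqn
To prove this, I would work locally, represent $\shm$ by a bounded complex $\shm^\bullet$ of free $\shd_X$-modules of finite rank as in~\eqref{eq:globalpresent}, and observe that $\Sol(\shm)$ is then computed by the complex~\eqref{eq:solm} whose terms are $\sho_X^{N_i}$ and whose differentials are left multiplication by matrices $P_i$ of differential operators. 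Applying $\mu_M$ termwise, and using that this functor commutes with finite direct sums, one obtains a complex on $T^*_MX$ with terms $\mu_M(\sho_X)^{N_i}$ and differentials $\mu_M(P_i\cdot)$ that simultaneously computes $\mu_M(\Sol(\shm))$ and $\rhom[\shd_X](\shm,\mu_M\sho_X)$; the latter holds because the natural $\opb{\pi_M}\shd_X$-action on $\mu_M(\sho_X)$ is obtained, by functoriality of $\mu_M$, from the $\shd_X$-action on $\sho_X$.

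Combining these observations yields $\rhom[\shd_X](\shm,\shc_M)\simeq\mu_M(\Sol(\shm))[n]\tens\opb{\pi_M}\ori_M$. Since $\mu_M(\scbul)=\muhom(\C_M,\scbul)$, the support bound for $\muhom$ recalled in~\S~\ref{section:muhom} gives
\eqn
&&\supp\mu_M(\Sol(\shm))\subset\musupp(\C_M)\cap\musupp(\Sol(\shm))=T^*_MX\cap\chv(\shm),
\eneqn
using Example~\ref{exa:SSi}(ii) and Theorem~\ref{th:ssinchar1}. Passing to the $(j+n)$-th cohomology sheaf (and noting that tensoring with the rank-one sheaf $\opb{\pi_M}\ori_M$ does not affect supports) gives $\supp\ext[\shd_X]{j}(\shm,\shc_M)\subset T^*_MX\cap\chv(\shm)$, whence $\supp(u)\subset U\cap\chv(\shm)$ for any local section $u$. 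The second assertion is then immediate: for $u\in\Hom[\shd_X](\shm,\shb_M)$, the wave front set $\WF(u)$ is by definition the support of $\spec(u)\in\sect(T^*_MX;\ext[\shd_X]{0}(\shm,\shc_M))$, and the preceding bound yields $\WF(u)\subset T^*_MX\cap\chv(\shm)$.

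The principal obstacle is the identification $\rhom[\shd_X](\shm,\mu_M\sho_X)\simeq\mu_M(\Sol(\shm))$: it requires verifying that the action of each $P\in\shd_X$ on $\sho_X$ is transported by $\mu_M$ to the expected $\opb{\pi_M}\shd_X$-action on $\mu_M(\sho_X)$, which is a compatibility between the functoriality of $\mu_M$ and the ring structure of $\shd_X$. Once this is in place, the remainder of the argument is a direct application of Theorem~\ref{th:ssinchar1} and of the formal properties of $\muhom$ recorded in~\S~\ref{section:muhom}.
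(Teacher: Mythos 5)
Your argument is correct and is essentially the paper's own proof: the paper simply asserts the isomorphism $\rhom[\opb{\pi_M}\shd_X](\opb{\pi_M}\shm,\shc_M)\simeq\muhom(\RD'_X\C_M,\Sol(\shm))$, which is exactly your identification $\shc_M\simeq\mu_M(\sho_X)[n]\tens\opb{\pi_M}\ori_M$ combined with the commutation of $\rhom[\shd_X](\shm,\scbul)$ with microlocalization (which you justify by d\'evissage on a local finite free resolution), and then both proofs conclude by the support estimate $\supp\muhom(F_2,F_1)\subset\musupp(F_2)\cap\musupp(F_1)$ together with Theorem~\ref{th:ssinchar1} and $\musupp(\C_M)=T^*_MX$. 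The only difference is cosmetic: you carry the shift and orientation twist explicitly and spell out the resolution argument that the paper leaves implicit.
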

\begin{proof}
One has
\eqn
\rhom[\opb{\pi_M}\shd_X](\opb{\pi_M}\shm,\shc_M)&\simeq&\muhom(\RD'\C_M,\rhom(\shm,\sho_X))
\eneqn
and the support of the right-hand side is contained in $\musupp(\rhom(\shm,\sho_X))\cap\musupp(\C_M)$, that is, in 
$T^*_MX\cap\chv(\shm)$. 
\end{proof}
In case $\shm$ is elliptic, we get that $\WF(u)$ is contained in the zero-section, hence $u$ is real analytic. 
\begin{remark}
One can treat similarly hyperbolic systems and in particular, one can solve globally the Cauchy problem on globally hyperbolic spacetimes, using only tools from sheaf theory (see~\cite{JS16}).
\end{remark}

\subsection{A glance at symplectic topology}\label{subsection:symp}

When a space is endowed with a certain structure, it is natural to associate to it a sheaf (or something similar, a stack for example) which takes into account this structure. On a real manifold, one considers the sheaf of $C^\infty$-functions, on a complex manifold the sheaf of holomorphic functions, on a complex symplectic manifold the sheaves of holomorphic deformation-quantization algebras (which are  sheaves only locally, globally one has to replace the notion of a sheaf by that of  an algebroid stack). Then, on a real symplectic manifold $\symx$, what is, or what are, the candidate(s)?
One answer is not given by a sheaf but by a category, the ``Fukaya category''. We shall not describe it here, let us only mention that its construction is not local (and cannot be so), its objects are smooth Lagrangian submanifolds (plus some data such as local systems) and the morphisms between two Lagrangians are only defined when the Lagrangians are transverse (which makes it difficult to define the identity morphisms!). 

It is only quite recently that people realized that microlocal sheaf theory is an efficient tool to treat many questions of  symplectic topology.  In~\cite{Ta08} Dmitri Tamarkin gives a new proof of the Arnold non-disp\-laceabi\-lity theorem and in~\cite{Na09, NZ09}  David Nadler and Eric Zaslow  showed that the Fukaya category on $T^*M$ is equivalent (in some sense) to the bounded derived category of $\R$-constructible sheaves on $M$. 
These works opened new perspectives  in symplectic topology 
(see in particular~\cite{Gu12, Gu13, Gu16})  and also in the study of  Legendrian knots (see~\cite{STZ14}).

Note that, for $U$ open in $T^*M$, a substitute to the Fukaya category on $U$ could be  the triangulated category  $\Derb(\cor_M;U)$ already encountered.  However, if the objects of this category are associated with the microsupports of sheaves, which are  co-isotropic, these microsupports are $\R^+$-conic. To treat non-conic Lagrangian submanifolds, Tamarkin develops a kind of no more conic microlocal sheaf theory by adding a variable $t$, with dual variable $\tau$, and works in the category of sheaves on $M\times\R$ localized at $\tau>0$.

\subsubsection*{Arnold non-displaceability theorem}

Let us explain the classical  Arnold non-displaceability  conjecture, which has been a theorem for long. 

A symplectic isotopy of a symplectic manifold $\symx$ is a $1$-parameter family of isomorphisms of $\symx$ which respect the symplectic structure. More precisely, $I$ is an open interval containing $0$ and $\Phi\cl \symx \times I\to \symx$ is a $C^\infty$-map such that
$\phi_t\eqdot\Phi(\cdot,t)\cl \symx \to \symx$ is a symplectic isomorphism for
each $t\in I$ and is the identity for $t=0$. One says that the isotopy is Hamiltonian if the graph of 
$\Phi$ in  $\symx \times I\times\symx$ is the projection of a Lagrangian submanifold $\Lambda_\Phi$ of $\symx \times T^*I\times\symx$.  
Then Arnold's conjecture says that for $\symx=T^*N$ where $N$ is  a compact $C^\infty$-manifold, 
$\phi_t(T^*_NN)\cap T^*_NN\neq\varnothing$ for all $t\in I$. 

\centerline{
\includegraphics[height=1.5in]{figArnold.png}}
As already mentioned, Tamarkin~\cite{Ta08} has given a totally new proof of this result by adapting microlocal sheaf theory to a non-conic setting. However,  one can also deduce Arnold's conjecture from another conjecture which is itself ``conic''. This is the strategy of ~\cite{GKS12} that we shall expose. 

Notice first that a homogeneous symplectic isotopy of an open conic subset $U\subset T^*M$ is a  symplectic isotopy which commutes with the $\R^+$-action. In such a case, the isotopy is automatically Hamiltonian.

Recall that $ \dTM$ denotes the bundle $T^*M$ with the zero-section removed. One denotes by $\Derlb(\cor_{M})$ the full subcategory of $\RD(\cor_M)$ consisting of locally bounded objects. 
For $K\in\Derlb(\cor_{M\times M\times I})$ and $s\in I$, we shall denote by $K_s$ the restriction of $K$ to
$M\times M\times\{s\}$.

\begin{theorem} {[Quantization of Hamiltonian isotopies~\cite{GKS12}]}\label{th:gks12}
Let $\Phi\cl \dTM\times I\to \dTM$ be a homogeneous Hamiltonian isotopy.
Then there exists $K\in\Derlb(\cor_{M\times M\times I})$ satisfying
\banum
\item
$\musupp(K)\subset\Lambda_\Phi\cup T^*_{M\times M\times I}(M\times M\times I)$,
\item
$K_0\simeq \cor_\Delta$.
\eanum
Moreover:
\bnum
\item
both projections $\supp(K)\tto M\times I$ are proper,
\item
setting $\opb{K_s}\eqdot\opb{v}\rhom(K_s,\omega_M\etens\cor_M)$ with $v\cl M\times M\to M\times M$, $v(x,y)=(y,x)$, we have 
$K_s\conv\opb{K_s}\simeq \opb{K_s}\conv K_s\simeq \cor_\Delta$
for all $s\in I$,
\item
such a $K$ satisfying the conditions
(a) and (b) above is unique up to a unique isomorphism.
\enum
\end{theorem}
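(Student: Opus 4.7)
My plan is to prove uniqueness first, then use it as a tool to glue together local existence results, and finally to read off properties (i) and (ii) from the construction plus uniqueness. The intuition is that the microsupport condition (a) forces $K$ to ``propagate'' along the foliation of $T^*(M\times M\times I)$ determined by $\Lambda_\Phi$; combined with the initial datum (b), this makes $K$ rigid.

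\textbf{Uniqueness.} Let $K,K'$ both satisfy (a) and (b), and set $H \seteq \rhom(K,K')$ on $M\times M\times I$. By Corollary~\ref{cor:opboim} and the fact that $\Lambda_\Phi$ is Lagrangian in $\dT^*(M\times M\times I)$, the microsupport $\musupp(H)$ is contained in a closed conic set whose projection to $T^*I$ lies in the zero section (here I would use the geometry of the isotopy: tangent vectors to $\Lambda_\Phi$ in the $I$-direction are balanced by the Hamiltonian lift, so after the cancellation inherent in $\rhom$ the $\tau$-component drops out). Applying Theorem~\ref{th:Morse} to the projection $q\cl M\times M\times I\to I$ on the complex $\roim{q}H$, one gets that $\rsect(M\times M\times\{s\};H_s)$ is independent of $s\in I$. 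The identity $\id\cl \cor_\Delta\to\cor_\Delta$ at $s=0$ therefore lifts uniquely to a global section, and a symmetric argument applied to $\rhom(K',K)$ together with the composition~\eqref{eq:compmuhom} shows this section is an isomorphism, proving~(iii). A small refinement of this argument, applied to $\rhom(K_s\conv \opb{K_s},\cor_\Delta)$, yields~(ii).

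\textbf{Local existence.} For existence I would first treat the case where $I$ is a small interval $(-\eps,\eps)$. A homogeneous Hamiltonian isotopy is generated by a time-dependent homogeneous Hamiltonian $h_t$ of degree $1$ on $\dT^*M$; for $|t|$ sufficiently small, $\Lambda_\Phi$ is a conic Lagrangian close to the conormal bundle of the diagonal $T^*_\Delta(M\times M)\times T^*_II\subset T^*(M\times M\times I)$, and by standard Lagrangian generating-function theory it can be written (after possibly adding auxiliary fibre variables) as the image under the natural projection of the conormal of a smooth hypersurface $Z\subset M\times M\times I\times \R^N$. Taking $K$ to be the direct image under $M\times M\times I\times\R^N\to M\times M\times I$ (proper on the support of the relevant sheaf) of $\cor_{\{Z\geq 0\}}$ or its shift, and applying Theorem~\ref{th:oim} to compute $\musupp$, I obtain a kernel satisfying (a) and, after normalization, (b).

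\textbf{Globalization and properness.} To handle a general interval $I$, cover $I$ by finitely many small subintervals $I_j$ on each of which the local construction produces a kernel $K^{(j)}$. Uniqueness (applied on each overlap $I_j\cap I_{j+1}$, with initial condition taken at a common interior point) gives canonical gluing isomorphisms $K^{(j)}\vert_{I_j\cap I_{j+1}}\isoto K^{(j+1)}\vert_{I_j\cap I_{j+1}}$ satisfying the cocycle condition, and one descends to obtain $K$ on all of $M\times M\times I$. The properness assertion~(i) is preserved at each gluing step because $\Phi$ extends to a homogeneous isotopy of $\dT^*M\times I$ and the corresponding $\Lambda_\Phi$ has proper projections.

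\textbf{Expected difficulty.} The main obstacle is the local existence step: verifying that the sheaf manufactured from a generating function of $\Lambda_\Phi$ actually has microsupport \emph{equal} (not just contained in) the prescribed Lagrangian, and that its restriction at $t=0$ is $\cor_\Delta$ rather than some shift or twist of it. This forces careful bookkeeping of orientations and shifts when applying Theorem~\ref{th:opb} and Theorem~\ref{th:oim} through the generating-function construction, and is where the hypothesis of working on $\dT^*M$ (away from the zero section) is essential, since it guarantees simplicity of the resulting sheaf along $\Lambda_\Phi$ in the sense of Definition~\ref{def:simpleshv}.
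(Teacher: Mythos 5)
Note first that this survey states Theorem~\ref{th:gks12} without proof, referring to~\cite{GKS12}, so your proposal has to be measured against the argument given there; your overall architecture (uniqueness first, existence locally in time, gluing over $I$) does match~\cite{GKS12}, but both central steps have genuine gaps as written. In the uniqueness step, the appeal to Corollary~\ref{cor:opboim} for $H=\rhom(K,K')$ is not legitimate: the hypothesis of that corollary requires $\musupp(K)\cap\musupp(K')\subset T^*_{M\times M\times I}(M\times M\times I)$, whereas here both microsupports contain $\Lambda_\Phi$. Without this hypothesis one only has the non-transversal bound of \cite{KS90}*{Cor.~6.4.4,~6.4.5}, involving $\Lambda_\Phi^a\hatp\Lambda_\Phi$, and the $\tau$-component of this set does \emph{not} vanish: over a base point $(x,y,s)$ there may be several branches of $\Lambda_\Phi$ (several covectors $\xi$ with $\phi_s^{-1}(x,\xi)$ lying over $y$), and the $\tau$-components of two distinct branches differ by a difference of values of the Hamiltonian, so the ``cancellation inherent in $\rhom$'' is precisely what fails. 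The cancellation is exact only for the \emph{composition} $K'\conv\opb{K}$, because the intermediate cotangent variable pins down the branch; this is why \cite{GKS12} first establish invertibility (your~(ii)) and then reduce uniqueness to a kernel quantizing the identity isotopy, to which a constancy-in-$s$ (propagation) lemma applies. Note also that your use of Theorem~\ref{th:Morse} on $\roim{q}H$ presupposes properness over $I$ --- essentially statement~(i), which you have not yet established at that stage and which is a property of the sheaf, not merely of $\Lambda_\Phi$ having proper projections --- and that producing one global section of $\rhom(K,K')$ restricting to $\id$ at $s=0$ yields neither that it is an isomorphism nor the asserted \emph{uniqueness} of the isomorphism; the latter needs $\shend(K)\simeq\cor$, i.e.\ a simplicity/$\muhom$ computation you do not make.

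For existence, your generating-function construction is a genuinely different route from~\cite{GKS12}, which stays inside sheaf theory, but you leave unproved exactly the points where the content lies: that the conic Lagrangian $\Lambda_\Phi$ admits, locally in time and for a possibly noncompact $M$, a generating function with enough control at infinity that the pushforward is proper (this is where~(i) would come from); that the resulting kernel has microsupport contained in $\Lambda_\Phi\cup T^*_{M\times M\times I}(M\times M\times I)$ \emph{and} restricts at $s=0$ to $\cor_\Delta$ on the nose rather than up to shift or twist; and that~(ii) holds. You yourself flag these as the main obstacles, so as it stands the proposal is a plausible program whose two pillars --- the microsupport estimate driving uniqueness and the normalized local existence --- are not yet proofs.
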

\begin{example}
Let $M=\R^n$ and denote by $(x;\xi)$ the homogeneous symplectic
coordinates on $T^*\R^n$. Consider the isotopy 
$\phi_s(x;\xi)= (x-s\frac{\xi}{\vert\xi\vert};\xi)$, $s\in I=\R$.
One proves that there exists $K\in\Derb(\cor_{M\times M\times I})$ such that 
$K_s$ denoting its restriction to $M\times M\times\{s\}$, one has: 
$K_s\simeq \cor_{\{\vert x-y\vert\leq s\}}$ for $s\geq0$
and  $K_s\simeq\cor_{\{\vert x-y\vert<-s\}}[n]$ for $s<0$.
\end{example}

\begin{theorem}[{\cite{GKS12}}]\label{th:arnold1}
Consider  a homogeneous Hamiltonian isotopy
$\Phi=\{\phi_s\}_{s\in I}$ $\cl \dTM\times I\to \dTM$ and 
 a $C^1$-map $\psi\cl M \to \R$
such that the differential $d\psi(x)$ never vanishes. Set
\eqn
&& \Lambda_\psi \eqdot \{(x;d\psi(x)); \; x\in M\}\subset \dTM.
\eneqn
Let $F_0\in\Derb(\cor_M)$ with compact support and such that $\rsect(M;F_0)\not=0$.
Then for any $s\in I$, $\phi_s(\musupp(F)\cap\dTM)\cap\Lambda_\psi\neq\varnothing$.
\end{theorem}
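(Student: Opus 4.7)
The plan is to transport $F_0$ along $\Phi$ via the quantization kernel provided by Theorem~\ref{th:gks12}, and then to apply the Morse-type Corollary~\ref{cor:Morse1} to the transported sheaves.

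Let $K\in\Derlb(\cor_{M\times M\times I})$ be the kernel quantizing $\Phi$ given by Theorem~\ref{th:gks12}. Set $F\eqdot K\circ F_0\in\Derb(\cor_{M\times I})$, using composition of kernels along $M$, so that $F_s\simeq K_s\circ F_0$ for every $s\in I$; in particular $F_0\simeq\cor_\Delta\circ F_0\simeq F_0$. Properness of the two projections $\supp(K)\tto M\times I$ (Theorem~\ref{th:gks12}(i)), combined with the compactness of $\supp(F_0)$, ensures that $\supp(F)\to I$ is proper, so each $F_s$ has compact support.

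Next, combining $\musupp(K)\subset\Lambda_\Phi\cup T^*_{M\times M\times I}(M\times M\times I)$ with the bounds for proper direct image and non-characteristic inverse image (Theorems~\ref{th:oim} and~\ref{th:opb}) yields
\[
\musupp(F)\cap\dT^*(M\times I)\subset\{(x,s;\xi,\tau)\,:\,(x;\xi)=\phi_s(y;\eta),\ \tau=-H_s(x;\xi),\ (y;\eta)\in\musupp(F_0)\cap\dTM\},
\]
where $H_s$ is the homogeneous Hamiltonian generating $\Phi$. Slicing at a fixed $s$ gives $\musupp(F_s)\cap\dTM\subset\phi_s(\musupp(F_0)\cap\dTM)$, and the reverse inclusion follows by applying the same estimate to $\opb{K_s}\circ F_s\simeq F_0$, using the invertibility in Theorem~\ref{th:gks12}(ii). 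Moreover, since $\phi_s$ preserves $\dTM$, the right-hand side above meets the locus $\{\xi=0\}$ only where $(\xi,\tau)=0$; consequently Theorem~\ref{th:oim} applied to the projection $p\cl M\times I\to I$ places $\musupp(\reim{p}F)$ inside the zero-section of $T^*I$. By Example~\ref{exa:SSi}(i), $\reim{p}F$ is locally constant on $I$, and since $I$ is connected with $(\reim{p}F)_0\simeq\rsect(M;F_0)\neq0$, we deduce $\rsect(M;F_s)\neq0$ for all $s\in I$.

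Finally, apply Corollary~\ref{cor:Morse1} to $F_s$ and $\psi$: $\supp(F_s)$ is compact, $\rsect(M;F_s)\neq0$, and $d\psi$ never vanishes, so $\Lambda_\psi\subset\dTM$. This yields $\Lambda_\psi\cap\musupp(F_s)\neq\varnothing$, and the intersection automatically lies in $\musupp(F_s)\cap\dTM=\phi_s(\musupp(F_0)\cap\dTM)$, giving the claim. The main obstacle is the microsupport computation in the middle step: one must carefully track the composition $K\circ F_0$ and verify that the homogeneity of $\Phi$ forces the $\tau$-component of $\musupp(F)$ to vanish wherever the $\xi$-component does, so that $\reim{p}F$ is locally constant on $I$. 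It is precisely this homogeneity that allows the conic machinery of Definition~\ref{def:SS} to conclude without introducing an extra variable, as in Tamarkin's non-conic approach.
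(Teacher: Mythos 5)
Your proposal is correct and follows essentially the same route as the paper's (sketched) proof: transport $F_0$ by the GKS kernel, identify $\musupp(F_s)\cap\dTM$ with $\phi_s(\musupp(F_0)\cap\dTM)$, show the direct image of $K\circ F_0$ on $I$ is (locally) constant so that $\rsect(M;F_s)\simeq\rsect(M;F_0)\neq0$, and conclude with Corollary~\ref{cor:Morse1}. You merely fill in the microsupport estimates that the paper leaves implicit.
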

\begin{proof}[Sketch of proof]
Set $F_s=K_s\circ F_0$. Then $F_s$ has compact support and $\musupp(F_s)\cap\dTM=\phi_s(\musupp(F_0))\cap\dTM$. 
The direct image of $K\circ F_0$ on $I$ is a constant sheaf and this implies the isomorphism
$\rsect(M;F_s)\simeq\rsect(M;F_0)$. Then the result follows from Corollary~\ref{cor:Morse1}.
\end{proof}
It is possible to deduce (with some work, see loc.\ cit.\ Th.~4.16)  Arnold's conjecture from this result by choosing $M=N\times\R$ and $K=\cor_{N\times\{0\}}$.

\subsubsection*{Legendrian knots}

We assume now that $\cor$ is a field 
and  we  consider a closed smooth conic Lagrangian submanifold $\Lambda$ of $\dT^*M$. 
Recall Definition~\ref{def:simpleshv}.

It follows from Theorem~\ref{th:gks12} that 
the category $\Simple(\Lambda,\cor)$ is a Hamiltonian  isotopy invariant. (Note that this category may be empty. Conditions which ensure that it is not empty are obtained in~\cite{Gu12}.)

\begin{corollary}\label{cor:gks12}
Let $\Phi$ be a homogeneous Hamiltonian isotopy as in {\rm Theorem~\ref{th:gks12}}. Let $\Lambda_0$ be a smooth closed  conic Lagrangian 
submanifold of $\dT^*M$ and let $\Lambda_1=\phi_1(\Lambda_0)$. The categories 
$\Simple(\Lambda_0;\cor)$ and $\Simple(\Lambda_1;\cor)$ are equivalent. 
\end{corollary}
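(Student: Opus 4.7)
The plan is to produce the equivalence explicitly by convolution with the GKS kernel $K$ of Theorem~\ref{th:gks12}. Set $\Psi \eqdot K_1\conv(\scbul)\cl \Derlb(\cor_M)\to\Derlb(\cor_M)$, where $\conv$ denotes convolution along $M$ (i.e.\ $K_1\conv F = \reim{p_{13}}(K_1\lltens \opb{p_2}F)$ with the obvious projections from $M\times M$). By Theorem~\ref{th:gks12}(ii), the object $\opb{K_1}$ provides a quasi-inverse, since $K_1\conv\opb{K_1}\simeq\opb{K_1}\conv K_1\simeq\cor_\Delta$ and $\cor_\Delta\conv G\simeq G$ for any $G$. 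Thus $\Psi$ is an auto-equivalence of $\Derlb(\cor_M)$ and, by restriction, of $\Derb(\cor_M;\dT^*M)$.

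Next I would track the microsupport. Theorem~\ref{th:gks12}(a) says $\musupp(K_1)\cap \dT^*(M\times M)\subset\Lambda_{\phi_1}$, where $\Lambda_{\phi_1}\subset \dT^*(M\times M)$ is the (twisted) graph of $\phi_1$. Combining the estimates for proper direct images (Theorem~\ref{th:oim}), non-characteristic inverse images (Theorem~\ref{th:opb}), and tensor products (Corollary~\ref{cor:opboim}), one obtains on the dotted bundle
\eqn
&&\musupp(K_1\conv F)\cap\dT^*M\;\subset\; \phi_1\bl\musupp(F)\cap\dT^*M\br.
\eneqn
Hence $\Psi$ sends $\Derb_{\Lambda_0}(\cor_M;\dT^*M)$ into $\Derb_{\Lambda_1}(\cor_M;\dT^*M)$, and symmetrically for the inverse, so $\Psi$ restricts to an equivalence between these two subcategories.

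It remains to show that $\Psi$ preserves the simplicity condition of Definition~\ref{def:simpleshv}. The crucial step is a functoriality isomorphism of the form
\eq\label{eq:muhomconv}
&&\phi_1^*\, \muhom(F,F')\isoto \muhom(K_1\conv F,\;K_1\conv F')
\eneq
on a neighbourhood of $\Lambda_1$, i.e.\ that convolution with the invertible kernel $K_1$ is a \emph{microlocal equivalence} along the graph $\Lambda_{\phi_1}$. Granting \eqref{eq:muhomconv}, if $\cor_{\Lambda_0}\isoto\muhom(F,F)$ then applying $\phi_1^*$ yields $\cor_{\Lambda_1}\isoto\phi_1^*\muhom(F,F)\simeq\muhom(\Psi F,\Psi F)$, which is the required simplicity of $\Psi F$ along $\Lambda_1$.

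The main obstacle is establishing \eqref{eq:muhomconv}. I would argue it in two stages. First, from $K_0=\cor_\Delta$ (Theorem~\ref{th:gks12}(b)) and the uniqueness clause (iii), the kernel $K_s$ is itself simple along $\Lambda_{\phi_s}\subset \dT^*(M\times M)$ for every $s$ (the simplicity class is continuous in $s$ and trivially true at $s=0$). Second, the general formalism of kernel composition and the behaviour of $\muhom$ under proper pushforward and non-characteristic pullback (\cite{KS90}*{\S~4.4 and Ch.~VII}) gives a natural morphism \eqref{eq:muhomconv} whose invertibility is equivalent to simplicity of $K_1$ along $\Lambda_{\phi_1}$ together with the identity $\opb{K_1}\conv K_1\simeq\cor_\Delta$. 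Once \eqref{eq:muhomconv} is in place, the same argument applied to $\opb{K_1}$ shows that $\Psi$ and its inverse both preserve $\Simple(\scbul,\cor)$, completing the equivalence.
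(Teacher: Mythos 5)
Your proposal is correct and is essentially the argument the paper has in mind (and that \cite{GKS12} carries out): convolution with the invertible kernel $K_1$ gives the equivalence, the microsupport estimates transport $\Lambda_0$ to $\Lambda_1=\phi_1(\Lambda_0)$, and simplicity is preserved because $K$ is itself simple along $\Lambda_\Phi$ (by deformation from $K_0=\cor_\Delta$), which yields the compatibility of $\muhom$ with composition as in \cite{KS90}*{\S~7.2}. One small correction: in your key isomorphism the functor carrying $\muhom(F,F')$ to a neighbourhood of $\Lambda_1$ should be the pushforward $\phi_{1*}$ (equivalently $(\phi_1^{-1})^*$), not $\phi_1^*$, since $\phi_1^*$ would land near $\phi_1^{-1}(\Lambda_0)$; with that adjustment the argument goes through as you describe.
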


\begin{example}
In $\R^2$ with coordinates $(x,y)$ we define the following locally closed subset with boundary the cusp
\begin{equation}\label{eq:interior_cusp}
W = \{(x,y); \; x>0, \; -x^{3/2} \leq y < x^{3/2}\} .
\end{equation}
Outside the zero section,
$\musupp(\cor_W)$ is the smooth Lagrangian submanifold
\begin{equation}\label{eq:Lambda_cusp}
\Lambda_{cusp} = \{(t^2,t^3;-3 tu,2u); \;t\in\R, \; u>0\}.    
\end{equation}
\end{example}

Now assume that $M=N\times\R$ with $\dim N=1$, denote by $(t;\tau)$ the coordinates on $T^*\R$. Assume that $\Lambda$ is a closed conic smooth Lagrangian submanifold contained in  the open set $\{\tau>0\}$ of $T^*M$ and that its projection on $M$ is compact. 
By considering the image of $\Lambda$ in $T^*N\times\R$ by the map $(x,t;\xi,\tau)\mapsto (x,t;\xi/\tau)$ one gets a closed Legendrian smooth submanifold of the contact manifold $T^*N\times\R$. Its image by the projection $\pi$ is called a Legendrian knot, or a link, in $M$. Generically a link is a curve in $M$ with ordinary double points and cusps as its only singularities. A natural and important problem is  to find invariants which allow one to distinguish different links. The category $\Simple(\Lambda,\cor)$ is such an invariant and in~\cite{STZ14} the authors show that it allows to distinguish the two so-called Chekanov knots, a result which was not obtained with the traditional methods. 
\subsection{Conclusion}
We have discussed here a few applications of microlocal analysis or microlocal sheaf theory, 
first to systems of linear partial differential equations, next to symplectic topology. 

However, there are other applications, in particular in representation theory (see~\cite{HTT08}), in  singularity theory (see~\cite{Di04}) and, since   
quite recently,  in algebraic geometry. Indeed, 
Alexander Beilinson has constructed
the microsupport of constructible sheaves on schemes (see~\cite{Be15}) and this  new theory is being developed by several authors (see in particular~\cite{Sa15}). 


\providecommand{\bysame}{\leavevmode\hbox to3em{\hrulefill}\thinspace}
\begin{bibdiv}
\begin{biblist}

\bib{An07} {article}{
author={Andronikof, Emmanuel},
title={Interview with Mikio Sato},
journal={Notices of the AMS},
volume={54, 2}, 
year={2007}
}

 \bib{Be15}  {article}{
author= {Beilinson, Alexander},
title={Constructible sheaves are holonomic},
eprint={arXiv:1505.06768v7},
year={2015}
}

\bib{BM88}{article}{
author={Bierstone, E},
author={Milman, P.~D.},
title={Semi-analytic and subanalytic sets},
journal={Publ. Math. IHES},
volume= {67}, 
date={1988},
pages={5-42}
}

\bib{BK89}{article}{
author={Bondal, Alexei},
author={Kapranov, Mikhail},
title={Representable functors, Serre functors, and mutations},
journal={Izv. Akad. Nauk SSSR},
volume={53}
year={1989}
pages={1183-1205}
}

\bib{BI73} {article}{
author={Bros, Jacques},
author={Iagolnitzer, Daniel},
title={Causality and local analyticity: mathematical study},
journal={Annales Inst. Fourier},
volume={18},
pages={147--184},
year={1973}
}

\bib{Di04}{book}{
author={Dimca, Alexandru},
title={Sheaves in topology},
series={Universitext},
publisher={Springer},
year={2004}
}
\bib{EM45} {article}{
author= {Eilenberg, S},
author= { Mac~Lane,  S},
title ={General theory of natural equivalences}, 
journal={Trans. Amer. Math. Soc. 58},
year={1945},
pages ={231--294} 
}

\bib{Ga81} {article}{
author={Gabber, Ofer},
title={The integrability of the characteristic variety},
journal={Amer. Journ. Math.},
volume= {103},
year={1981},
pages={ 445--468}
}

 \bib{Gri08}{article}{
 author={Ghrist, Robert},
title={Barcodes: The persistent topology of data},
journal={Bull. Amer. Math. Soc. },
volume={45},
year={2008},
pages={61-75}
}

\bib{Gr57}  {article}{
author= {Grothendieck,  Alexander},
title={Sur quelques points d'alg{\`e}bre homologique} , 
journal={Tohoku Math. Journ} ,
year={1957} ,
 pages ={119-183}
 }
\bib{Gu12}{article}{
 author={Guillermou, St\'ephane},
title={Quantization of conic Lagrangian submanifolds of cotangent bundles},
eprint={arXiv:1212.5818},
date={2012}
}

\bib{Gu13}{article}{
 author={Guillermou, St\'ephane},
title={The Gromov-Eliashberg theorem by microlocal sheaf theory},
eprint={arXiv:1311.0187},
date={2013}
}

\bib{Gu16}{article}{
 author={Guillermou, St\'ephane},
title={The three cusps conjecture},
eprint={arXiv:1603.07876},
date={2016}
}

\bib{GKS12}{article} {
 author={Guillermou, St\'ephane},
  author={Kashiwara, Masaki},
 author={Schapira, Pierre},
 title={Sheaf quantization of Hamiltonian isotopies and applications to non-displaceability problems}, 
 journal={Duke Math Journal},
 volume= {161},
 date={2012},
  pages={201Ð245}
}

\bib{Ho71} {article}{
author={H{\"o}rmander, Lars},
title={Fourier integral operators. I. },
 volume={127},
journal={Acta Math.},
page={79Ð183}
 year={1971}
 }

\bib{Ho83} {book}{
author={H{\"o}rmander, Lars},
title={The analysis of linear partial differential operators},
 series={Grundlehren der Mathematischen Wissenschaften},
 volume={256, 257 and 274, 275},
 publisher={Springer-Verlag, Berlin},
 year={1983, 1985}
 }
\bib{HTT08}{book}{
author={Hotta, R},
author={Takeuchi, K},
author={Tanisaki, T},
title={D-Modules, Perverse Sheaves, and Representation Theory},
series={Progress in Math.},
volume={236},
publisher= {Birkhauser},
year={2008},
pages={x+ 407}
}

\bib{JS16}{article}{
author={Jubin, Beno\^it},
author={Schapira, Pierre},
title={Sheaves and D-modules on causal manifolds,},
journal={Letters in Mathematical Physics},
volume={16},
date={2016},
pages={607-648}
}

\bib{Ka70}{book}{
author={Kashiwara, Masaki},
title={Algebraic study of systems of partial differential equations },
 note={Translated from author's thesis in Japanese, Tokyo 1970, by A. D'Agnolo and J-P.Schneiders},
publisher= {Soc. Math. France},
series={M{\'e}moires SMF},
volume={63},
year={1995},
 pages={xiii+72}
}

\bib{Ka03}{book}{
   author={Kashiwara, Masaki},
   title={$D$-modules and microlocal calculus},
   series={Translations of Mathematical Monographs},
   volume={217},
   publisher={American Mathematical Society, Providence, RI},
   date={2003},
   pages={xvi+254},
}

\bib{KS82} {article}{
author={Kashiwara, Masaki},
author={Schapira, Pierre},
title={Microsupport des faisceaux: applications aux modules diff{\'e}rentiels},
journal={C.~R.~Acad.\ Sci.\ Paris},
volume={295, 8},
pages={487--490},
date={1982}
}

\bib{KS85} {book}{
author={Kashiwara, Masaki},
author={Schapira, Pierre},
title={Microlocal study of sheaves},
 series={Ast{\'e}risque},
 volume={128},
 publisher={Soc.\ Math.\ France},
 date={1985}
 }

\bib{KS90}{book}{
 author={Kashiwara, Masaki},
 author={Schapira, Pierre},
 title={Sheaves on manifolds},
 series={Grundlehren der Mathematischen Wissenschaften},
 volume={292},
 publisher={Springer-Verlag, Berlin},
 date={1990},
 pages={x+512},
}

\bib{KS97}{article}{
author={Kashiwara, Masaki},
author={Schapira, Pierre},
title={Integral transforms with exponential kernels and Laplace transform},
journal={ Journal of the AMS},
volume={10},
pages={939-972},
date={1997}
}

\bib{KS06}{book}{
   author={Kashiwara, Masaki},
   author={Schapira, Pierre},
   title={Categories and sheaves},
   book={  series={Grundlehren der Mathematischen Wissenschaften},
   volume={332},
   publisher={Springer-Verlag, Berlin}},
   date={2006},
   pages={x+497},
}

\bib{Mr67} {article}{
author={Martineau, Andr\'e},
title={Th\'eor\`emes sur le prolongement analytique du type ``Edge of the Wedge''},
journal={Sem. Bourbaki},
volume={340},
year={1967/68}
}

\bib{Na09} {article}{
author={Nadler, David},
title={Microlocal branes are constructible sheaves},
journal={Selecta Math.},
volume={15},  
pages={563--619},
year={2009}
}

\bib{NZ09}{article}{
author={Nadler, David},
author={Zaslow, Eric},
title={Constructible sheaves and the Fukaya category},
journal={J. Amer. Math. Soc.},
volume= {22}, 
pages={233--286}, 
year={2009}
}

\bib{Sa15}{article}{
 author={Saito, Takeshi},
title={The characteristic cycle and the singular support of a constructible sheaf},
eprint={arXiv:1510.03018},
date={2015}
}

\bib{Sa60} {article}{
author={Sato, Mikio},
title={Theory of hyperfunctions,  I \& II},
journal={Journ. Fac. Sci. Univ. Tokyo} ,
volume={8},
pages={139--193, 487--436},
year={1959, 1960}
}

\bib{Sa70} {article}{
author={Sato, Mikio},
title={Regularity of hyperfunctions solutions of partial differential equations},
publisher={Actes du Congrs International des MathŽmaticiens, Gauthier-Villars, Paris},
volume={2},
pages={785--794},
year={1970}
}

\bib{SKK73}{article}{
 author={Sato, Mikio},
 author={Kawai, Takahiro},
 author={Kashiwara, Masaki},
 title={Microfunctions and pseudo-differential equations},
 conference={
 title={Hyperfunctions and pseudo-differential equations (Proc. Conf., Katata, 1971; dedicated to the memory of Andr\'e Martineau)},
 },
 book={publisher={Springer, Berlin}},
 date={1973},
 pages={265--529. Lecture Notes in Math., Vol. 287}
}

\bib{Sc07} {article}{
author={Schapira, Pierre},
 title={Mikio Sato, a visionary of mathematics},
journal={Notices of the AMS},
volume ={54, 2},
date={2007}
}

%

\bib{ScSn94}{book}{
author={Schapira, Pierre},
author={Schneiders, Jean-Pierre},
title={Index theorem for elliptic pairs},
 series={Ast{\'e}risque},
 volume={224}, 
 publisher={Soc.\ Math.\ France},
 date={1994}
 }
 
\bib{SGA4} {article}{
author={SGA4},
author={Artin, Mike},
author={Grothendieck, Alexander},
author={Verdier, Jean-Louis},
title={Th{\'e}orie des topos et cohomologie {\'e}tale des sch{\'e}mas}, 
book={
title={S{\'e}m. G{\'e}om. Alg. (1963--64) },
 series={Lecture Notes in Math.},
  publisher={Springer, Berlin},
 volume={269, 270, 305},
year={1972/73}}
}

\bib{STZ14}{article} {
author={Shende, Vivek},
author={Treumann, David},
author={Zaslow, Eric},
title={Legendrian knots and constructible sheaves},
eprint={arXiv:1402.0490},
date={2014}
}

\bib{Sj82} {book}{
author={Sj\"ostrand, Johannes},
title={Singularit\'es analytiques microlocales},
series={Ast{\'e}risque},
volume={95},
publisher={Soc.\ Math.\ France},
year={1982}
}

\bib{Ta08}{article} {
author={Tamarkin, Dmitry},
title={Microlocal conditions for non-displaceability},
eprint={arXiv:0809.1584},
date={2008}
}

\end{biblist}
\end{bibdiv}

\vspace*{1cm}
\noindent
\parbox[t]{21em}
{\scriptsize{
\noindent
Pierre Schapira\\
Sorbonne Universit{\'e}s, UPMC Univ Paris 6\\
Institut de Math{\'e}matiques de Jussieu\\
e-mail: pierre.schapira@imj-prg.fr\\
http://webusers.imj-prg.fr/\textasciitilde pierre.schapira/
}}

\end{document}